
\documentclass[12pt]{amsart}
\headheight=8pt     \topmargin=0pt
\textheight=624pt   \textwidth=432pt
\oddsidemargin=18pt \evensidemargin=18pt
\usepackage{mathrsfs}
\usepackage{amssymb}
\usepackage{verbatim}
\usepackage{hyperref}
\usepackage{color}
\usepackage [latin1]{inputenc}

\usepackage{enumerate}

\begin{document}

\newtheorem*{thmC}{Theorem C}
\newtheorem*{thmA}{Theorem A}
\newtheorem*{thmB}{Theorem B}
\newtheorem{theorem}{Theorem}    
\newtheorem{proposition}[theorem]{Proposition}
\newtheorem{conjecture}[theorem]{Conjecture}
\def\theconjecture{\unskip}
\newtheorem{corollary}[theorem]{Corollary}
\newtheorem{lemma}[theorem]{Lemma}
\newtheorem{assumption}[theorem]{Assumption}
\newtheorem{sublemma}[theorem]{Sublemma}
\newtheorem{observation}[theorem]{Observation}
\theoremstyle{definition}
\newtheorem{definition}{Definition}
\newtheorem{notation}[definition]{Notation}
\newtheorem{remark}[definition]{Remark}
\newtheorem{question}[definition]{Question}
\newtheorem{questions}[definition]{Questions}
\newtheorem{example}[definition]{Example}
\newtheorem{problem}[definition]{Problem}
\newtheorem{exercise}[definition]{Exercise}

\numberwithin{theorem}{section}
\numberwithin{definition}{section}
\numberwithin{equation}{section}

\def\earrow{{\mathbf e}}
\def\rarrow{{\mathbf r}}
\def\uarrow{{\mathbf u}}
\def\varrow{{\mathbf V}}
\def\tpar{T_{\rm par}}
\def\apar{A_{\rm par}}

\def\reals{{\mathbb R}}
\def\torus{{\mathbb T}}
\def\heis{{\mathbb H}}
\def\integers{{\mathbb Z}}
\def\naturals{{\mathbb N}}
\def\complex{{\mathbb C}\/}
\def\distance{\operatorname{distance}\,}
\def\support{\operatorname{support}\,}
\def\dist{\operatorname{dist}\,}
\def\Span{\operatorname{span}\,}
\def\degree{\operatorname{degree}\,}
\def\kernel{\operatorname{kernel}\,}
\def\dim{\operatorname{dim}\,}
\def\codim{\operatorname{codim}}
\def\trace{\operatorname{trace\,}}
\def\Span{\operatorname{span}\,}
\def\dimension{\operatorname{dimension}\,}
\def\codimension{\operatorname{codimension}\,}
\def\nullspace{\scriptk}
\def\kernel{\operatorname{Ker}}
\def\ZZ{ {\mathbb Z} }
\def\p{\partial}
\def\rp{{ ^{-1} }}
\def\Re{\operatorname{Re\,} }
\def\Im{\operatorname{Im\,} }
\def\ov{\overline}
\def\eps{\varepsilon}
\def\lt{L^2}
\def\diver{\operatorname{div}}
\def\curl{\operatorname{curl}}
\def\etta{\eta}
\newcommand{\norm}[1]{ \|  #1 \|}
\def\expect{\mathbb E}
\def\bull{$\bullet$\ }

\def\xone{x_1}
\def\xtwo{x_2}
\def\xq{x_2+x_1^2}
\newcommand{\abr}[1]{ \langle  #1 \rangle}

\newcommand{\Norm}[1]{ \left\|  #1 \right\| }
\newcommand{\set}[1]{ \left\{ #1 \right\} }
\def\one{\mathbf 1}
\def\whole{\mathbf V}
\newcommand{\modulo}[2]{[#1]_{#2}}

\def\scriptf{{\mathcal F}}
\def\scriptg{{\mathcal G}}
\def\scriptm{{\mathcal M}}
\def\scriptb{{\mathcal B}}
\def\scriptc{{\mathcal C}}
\def\scriptt{{\mathcal T}}
\def\scripti{{\mathcal I}}
\def\scripte{{\mathcal E}}
\def\scriptv{{\mathcal V}}
\def\scriptw{{\mathcal W}}
\def\scriptu{{\mathcal U}}
\def\scriptS{{\mathcal S}}
\def\scripta{{\mathcal A}}
\def\scriptr{{\mathcal R}}
\def\scripto{{\mathcal O}}
\def\scripth{{\mathcal H}}
\def\scriptd{{\mathcal D}}
\def\scriptl{{\mathcal L}}
\def\scriptn{{\mathcal N}}
\def\scriptp{{\mathcal P}}
\def\scriptk{{\mathcal K}}
\def\frakv{{\mathfrak V}}

\author{Jian Tan}
\address{Jian Tan
\\
School of Science
\\
Nanjing University of Posts and Telecommunications
\\Nanjing 210023
\\
People's Republic of China
}
\email{tj@njupt.edu.cn; tanjian89@126.com}

\subjclass[2020]{Primary 42B30; Secondary 42B20, 42B25, 46E30.}


\keywords{Multilinear fractional integral operators, Hardy spaces, variable exponents,
atomic decomposition.}
\title
[Multilinear fractional integral operators]
{Hardy type spaces estimates for multilinear fractional integral operators}
\maketitle

\begin{abstract}
In this paper, we prove the boundedness of multilinear fractional integral operators from products of Hardy spaces associated with ball quasi-Banach function spaces into their corresponding ball quasi-Banach function spaces. As applications, we establish the boundedness of these operators on various function spaces, including weighted Hardy spaces, variable Hardy spaces, mixed-norm Hardy spaces, Hardy--Lorentz spaces, and Hardy--Orlicz spaces. Notably, several of these results are new, even in special cases, and extend the existing theory of multilinear operators in the context of generalized Hardy spaces.
\end{abstract}

\section{Introduction}
The theory of Hardy spaces originated in the early 20th century from studies in Fourier series and complex analysis in one variable, with a major advancement occurring in 1960 when Stein--Weiss \cite{SW} and Fefferman--Stein \cite{FS} established the theory in the multidimensional setting of $\mathbb R^n$.
The classical Hardy space is characterized by the Littlewood--Paley--Stein square functions, maximal functions, and atomic decompositions, with the latter being a particularly powerful tool in harmonic analysis and wavelet theory for studying function spaces and their associated operators. 
In recent years, the study of multilinear operators in the context of Hardy spaces has gained considerable attention, as evidenced by significant contributions from numerous authors, including those in \cite{CMN,CMN1,GK,HL,LXY}.

On the other hand, the multilinear fractional integral operators represent a natural generalization of their linear counterparts. The genesis of these operators can be traced back to the seminal work of Grafakos (\cite{G}) in 1992, where he introduced and investigated the multilinear fractional integral, defined as follows:
\begin{align*}
\bar{I}_\alpha(\vec{f})(x)=\int_{\mathbb R^n}
\frac{1}{|y|^{n-\alpha}}\prod_{i=1}^mf_i(x-\theta_iy)dy,
\end{align*}
where $\theta_i(i=1,\cdots,m)$ are fixed distinct
and nonzero real numbers and $0<\alpha<n.$
Later on, in 1998, Kenig and Stein \cite{KS} established
the boundedness of another
type of multilinear fractional integral
${I}_{\alpha,A}$
on product of Lebesgue spaces.
${I}_{\alpha,A}(\vec{f})$ is defined by
\begin{align*}
{I}_{\alpha,A}(\vec{f})(x)=\int_{(\mathbb R^n)^m}
\frac{1}{|(y_1,\ldots,y_m)|^{mn-\alpha}}\prod_{i=1}^mf_i(
\ell_i(y_1,\ldots,y_m,x))dy_i,
\end{align*}
where $\ell_i$ is a linear combination of $y_j$'s and $x$
depending on the matrix $A$.
In \cite{LL}, Lin and Lu obtained $I_{\alpha,A}$ is bounded
from product of Hardy spaces to Lebegue spaces when
$\ell_i(y_1, \ldots, y_m, x)=x-y_i$. We denote this multilinear fractional
type integral operators by $\mathcal T_\alpha$, namely,
\begin{align*}
{\mathcal T_\alpha}(f_1,\ldots,f_m)(x)=\int_{(\mathbb R^n)^m}
\frac{1}{|(y_1,\ldots,y_m)|^{mn-\alpha}}\left(\prod_{k=1}^mf_k
(x-y_k)\right)dy_1\cdots dy_m.
\end{align*}
For convenience, we also denote
$$K_\alpha(y_1,\ldots,y_m)=\frac{1}{|(y_1,\ldots,y_m)|^{mn-\alpha}}.$$

Motivated by diverse applications in analysis, many variants of classical Hardy spaces have been extensively investigated, such as weighted Hardy spaces, Hardy-Lorentz spaces, Hardy-Orlicz spaces, and variable Hardy spaces. However, generalized Orlicz spaces and weighted Lebesgue spaces do not necessarily fall into the category of quasi-Banach function spaces. To unify these function spaces under a common framework, Sawano et al. \cite{SHYY} introduced the concept of ball quasi-Banach function spaces $X$, which are defined analogously to quasi-Banach function spaces but with Lebesgue measurable sets replaced by balls in  
$\mathbb{R}^{n}$. 
Then they established the real-variable theory for the corresponding Hardy spaces.
For further details, we refer to \cite{CJY,SHYY,Tan24,YJY} and the references therein.

This leads us to the following natural question:
{\it Can we establish the boundedness of multilinear fractional integral operators on Hardy spaces associated with ball quasi-Banach function spaces?}

The primary objective of this paper is to address this question by investigating the boundedness properties of multilinear fractional integral operators on generalized Hardy-type spaces. We now present the main result of our work, while postponing some technical definitions to Section 2 for brevity.

\begin{theorem}\label{s1t1}
Given an integer $m\ge 1$, 
let $X_1,$ $\ldots,$ $X_m$, $Y_1,$ $\ldots,$ $Y_m$ and $Y$ be ball quasi-Banach function spaces.
Suppose that $X_1,$ $\ldots,$ $X_m$ have the absolutely continuous quasi-norm satisfying Assumptions \ref{ass2.7} and \ref{ass2.8}
and that
$Y$ fulfills Assumption \ref{ass2.8}.
Let $0<p, q, p_k, q_k<\infty$, $\alpha\in (0,mn)$
such that
$$
\frac{1}{q}=\sum_{k=1}^m\frac{1}{q_k},\quad\frac{1}{p}=\sum_{k=1}^m\frac{1}{p_k}
$$ 
and
$$
\frac{1}{p_k}-\frac{1}{q_k}=\frac{\alpha}{mn},$$ 
where $k=1, 2, \cdots, m.$
Suppose that the following H\"older's inequality holds true: for any $f_k\in Y_k$,
\begin{align}\label{holder}
\left\|\prod_{k=1}^mf_k\right\|_{Y}\lesssim \prod_{k=1}^m\|f_k\|_{Y_k}
\end{align}
and that $X_k^{\frac{1}{p_k}}$ and $Y_k^{\frac{1}{q_k}}$ are ball Banach function spaces fulfilling
\begin{align}\label{XY}
\left(Y_k^{\frac{1}{q_k}}\right)'=\left(\left(X_k^{\frac{1}{p_k}}\right)'\right)^{\frac{p_k}{q_k}}.
\end{align}
Further assume that the Hardy--Littlewood maximal operator $\mathcal M$ is bounded on
$\left(Y_k^{\frac{1}{q_k}}\right)'$.
Then $\mathcal T_\alpha$ can be extended to a bounded operator from
$H_{X_1}\times \cdots \times H_{X_m}$ into $Y$.
\end{theorem}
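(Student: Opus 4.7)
The plan is to run an atomic decomposition argument adapted to ball quasi-Banach function spaces, following the blueprint developed by Sawano et al.~\cite{SHYY}. Under Assumptions \ref{ass2.7} and \ref{ass2.8} together with absolute continuity of the quasi-norm, each $H_{X_k}$ admits an $(X_k,\infty,d)$-atomic decomposition, so given $f_k\in H_{X_k}$ I would write $f_k=\sum_{j_k}\lambda_{k,j_k}a_{k,j_k}$, where $a_{k,j_k}$ is supported in a ball $B_{k,j_k}=B(x_{k,j_k},r_{k,j_k})$, satisfies $\|a_{k,j_k}\|_{L^\infty}\le \|\chi_{B_{k,j_k}}\|_{X_k}^{-1}$, and has vanishing moments up to some order $d$ to be chosen large in terms of $\alpha$, $n$, the $p_k$'s, and the Boyd-type indices of $X_k$. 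By multilinearity this reduces the theorem to a uniform control of $\|\mathcal{T}_\alpha(a_{1,j_1},\ldots,a_{m,j_m})\|_Y$ together with a summability estimate in $\vec{\jmath}=(j_1,\dots,j_m)$.

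Next, I would establish a pointwise estimate for a single tuple of atoms by splitting the integral according to whether $x$ is close to every $x_{k,j_k}$ or far from at least one of them. On the local region, combining the $L^{p_1}\times\cdots\times L^{p_m}\to L^q$ boundedness of $\mathcal{T}_\alpha$ from Kenig--Stein \cite{KS} with the $L^\infty$ size of the atoms yields an $L^q$-bound on a suitably enlarged ball. On the global region, I would Taylor-expand the kernel $K_\alpha(x-y_1,\ldots,x-y_m)$ in $(y_1,\ldots,y_m)$ around the atom centers and use the moment cancellations of each $a_{k,j_k}$ to extract decay of the form $(r_{k,j_k}/|x-x_{k,j_k}|)^{n+d+1}$ in each coordinate. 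The two pieces together produce a pointwise bound in which $\mathcal{T}_\alpha(a_{1,j_1},\ldots,a_{m,j_m})$ is dominated by a product of powered Hardy--Littlewood maximal functions of $\chi_{B_{k,j_k}}$, each scaled by $\|\chi_{B_{k,j_k}}\|_{X_k}^{-1}$.

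The final step is to translate this pointwise atomic bound into a $Y$-norm bound. The H\"older inequality \eqref{holder} factors $\|\cdot\|_Y$ into a product of $Y_k$-norms; the duality relation \eqref{XY} rewrites the norm on $Y_k^{1/q_k}$ in terms of a $p_k/q_k$-power of the K\"othe associate of $X_k^{1/p_k}$; and the assumed boundedness of $\mathcal{M}$ on $(Y_k^{1/q_k})'$ produces, via duality, the vector-valued Fefferman--Stein inequality that removes the maximal functions at the cost of replacing them by the plain indicators $\chi_{B_{k,j_k}}$. Summing over $\vec{\jmath}$ and invoking the atomic characterization of each $H_{X_k}$ then yields $\|\mathcal{T}_\alpha(\vec f)\|_Y\lesssim \prod_{k=1}^m\|f_k\|_{H_{X_k}}$.

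The main obstacle I expect is the multiscale matching in the second step: when the balls $B_{1,j_1},\ldots,B_{m,j_m}$ have widely different radii and centers, the natural local/global splittings do not align, and the resulting tail sums must be organised so that they remain summable after the vector-valued maximal inequality is applied. This is precisely why $d$ must be taken large in terms of all the $p_k$'s and the indices of $X_k$, and why one needs $\mathcal{M}$ bounded on the full K\"othe associate $(Y_k^{1/q_k})'$, not merely on $Y$ itself. Once these pointwise bounds are in hand, a standard density argument using absolute continuity of the quasi-norm extends $\mathcal{T}_\alpha$ from finite atomic combinations to all of $H_{X_1}\times\cdots\times H_{X_m}$.
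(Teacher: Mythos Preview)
Your overall architecture matches the paper: finite atomic decomposition in each $H_{X_k}$, a local/global split according to whether $x$ lies in every $Q^\ast_{k,j_k}$, H\"older's inequality \eqref{holder} to separate into $Y_k$-norms, and a density argument. Two points, however, need sharpening.

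On the global region you cannot Taylor-expand in all variables and extract decay $(r_{k,j_k}/|x-x_{k,j_k}|)^{n+d+1}$ in each coordinate: when, say, $x\in Q^\ast_{2,j_2}$ there is no lower bound on $|x-y_2|$, so the cancellation of $a_{2,j_2}$ yields nothing usable. The paper instead fixes the nonempty set $A\subset\{1,\dots,m\}$ of indices with $x\notin Q^\ast_{k,j_k}$, uses the vanishing moments of \emph{only one} atom (the one supported on the smallest cube among $k\in A$), and then redistributes that single decay factor across $k\in A$ via $|(x-y_1,\dots,x-y_m)|\gtrsim\max_{k\in A}|x-z_{k,j_k}|$; the indices in $A^c$ contribute only the harmless factor $\chi_{Q^\ast_{k,j_k}}$. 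This is exactly the ``multiscale matching'' obstacle you flag, and it is resolved not by choosing $d$ large but by this asymmetric single-variable expansion.

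More seriously, you have misidentified the role of \eqref{XY} and of the boundedness of $\mathcal M$ on $(Y_k^{1/q_k})'$. These do not produce a Fefferman--Stein inequality for removing maximal functions; they drive a Rubio de Francia extrapolation (Lemma~\ref{l2.8} in the paper) that absorbs the \emph{fractional} weight and yields
\[
\Bigl\|\sum_{j_k}\lambda_{k,j_k}\,|Q_{k,j_k}|^{\alpha/(mn)}\chi_{Q^\ast_{k,j_k}}\Bigr\|_{Y_k}
\lesssim
\Bigl\|\sum_{j_k}\lambda_{k,j_k}\,\chi_{Q_{k,j_k}}\Bigr\|_{X_k}.
\]
The factor $|Q_{k,j_k}|^{\alpha/(mn)}$ is inevitably produced by the pointwise atomic estimates for $\mathcal T_\alpha$, and without this step your argument reaches $Y_k$-norms of scaled indicators but never returns to the $X_k$-norms that the atomic characterization of $H_{X_k}$ requires. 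The vector-valued maximal inequality you invoke is a separate hypothesis (Assumption~\ref{ass2.7}); it handles $\mathcal M^{(\theta)}(\chi_{Q_{k,j_k}})$ but does not by itself effect the transfer from $Y_k$ to $X_k$.
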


The remainder of this paper is organized as follows. In Section 2, we review essential results on Hardy spaces associated with ball quasi-Banach function spaces that will be utilized throughout the paper. In particular, by employing Rubio de Francia extrapolation, we establish Lemma \ref{l2.7}, which plays a pivotal role in the proof of our main result. 
Section 3 is devoted to applying the main theorem on multilinear fractional integral operators to specific examples of ball quasi-Banach function spaces. These include weighted Lebesgue spaces, variable Lebesgue spaces, mixed-norm Lebesgue spaces, Lorentz spaces, and Orlicz spaces. Notably, to the best of our knowledge, our findings on the boundedness of these multilinear operators on mixed-norm Hardy spaces, Hardy-Lorentz spaces, and Hardy-Orlicz spaces represent entirely new contributions to the field.
In Section 4, we establish the boundedness of multilinear fractional integral operators on Hardy spaces $H_X$ by leveraging the finite atomic characterizations of these spaces. This result further extends our understanding of the behavior of multilinear operators in the context of generalized Hardy spaces.

Throughout this paper, $C$ or $c$ will denote a positive constant that may vary at each occurrence
but is independent to the essential variables, and $A\sim B$ means that there are constants
$C_1>0$ and $C_2>0$ independent of the essential variables such that $C_1B\leq A\leq C_2B$.
Given a measurable set $S\subset \mathbb{R}^n$, $|S|$ denotes the Lebesgue measure and $\chi_S$
means the characteristic function. For a cube $Q$, let $Q^\ast$ denote with the same center
and $2\sqrt{n}$ its side length, i.e. $l(Q^\ast)=100\sqrt{n}l(Q)$.
The symbols $\mathcal S$ and $\mathcal S'$ denote the class of
Schwartz functions and tempered functions, respectively.
As usual, for a function $\psi$ on $\mathbb R^n$,
$\psi_t(x)=t^{-n}\psi(t^{-1}x)$.

\section{Preliminaries}\label{se2}
In this section, we present some notation and known results that will be used throughout the paper.
First, we recall the definitions of ball quasi-Banach function spaces and their related (local) Hardy spaces. For any $x\in\mathbb R^{n}$ and $r\in(0,\infty)$, let $B(x,r):=\{y\in\mathbb R^{n}\colon\vert x-y\vert<r\}$ and
\begin{equation}\label{eq2.1}
\mathbb B(\mathbb R^{n}):=\left\{B(x,r)\colon x\in\mathbb R^{n}\ {\rm and}\ r\in(0,\infty)\right\}.
\end{equation}\par
The concept of ball quasi-Banach function spaces on $\mathbb R^{n}$ is as follows. For more details, see \cite{SHYY}.
\begin{definition}\label{def2.1}
    Let $X\subset\mathscr{M}(\mathbb R^{n})$ be a quasi-normed linear space equipped with a quasi-norm $\| \cdot \|_{X}$ which makes sense for all measurable functions on $\mathbb R^{n}$. Then $X$ is called a \emph{ball\ quasi\text{-}Banach\ function\ space} on $\mathbb R^{n}$ if it satisfies
    \begin{enumerate}[\quad(i)]
    \item if $f\in\mathscr{M}(\mathbb R^{n})$, then $\|f\|_{X}=0$ implies that $f=0$ almost everywhere;
    \item if $f, g\in\mathscr{M}(\mathbb R^{n})$, then $|g|\leq|f|$ almost everywhere implies that $\|g\|_{X}\leq\|f\|_{X}$;
    \item if $\{f_{m}\}_{m\in\mathbb N}\subset\mathscr{M}(\mathbb R^{n})$ and $f\in\mathscr{M}(\mathbb R^{n})$, then $0\leq f_{m}\uparrow f$ almost everywhere as $m\to\infty$ implies that $\|f_{m}\|_{X}\uparrow\|f\|_{X}$ as $m\to
    \infty$;
    \item $B\in\mathbb B(\mathbb R^{n})$ implies that $\chi_{B}\in X$, where $\mathbb B(\mathbb R^{n})$ is the same as (\ref{eq2.1}).
    \end{enumerate}
    Moreover, a ball quasi-Banach function space $X$ is called a \emph{ball\ Banach\ function\ space} if it satisfies
    \begin{enumerate}
    \item[(v)] for any $f,g\in X$
    \[
    \|f+g\|_{X}\leq\|f\|_{X}+\|g\|_{X};
    \]
    \item[(vi)] for any ball $B\in\mathbb B(\mathbb R^{n})$, there exists a positive constant $C_{(B)}$, depending on $B$, such that, for any $f\in X$,
    \[
    \int_{B}\vert f(x)\vert dx\leq C_{(B)}\| f\|_{X}.
    \]
    \end{enumerate}
\end{definition}
The associate space $X^{\prime}$ of any given ball Banach function space $X$ is defined as follows. 
Now we recall the definition of Hardy spaces associated with ball quasi-Banach function spaces in \cite{SHYY}.

\begin{definition}\label{hx}
Let $X$ be a ball quasi-Banach function space and let $f\in \mathcal{S'}$,
$\psi\in \mathcal S$
and $\psi_t(x)=t^{-n}\psi(t^{-1}x)$, $x\in \mathbb{R}^n$.
Denote by $\mathcal{M}_N$ the grand maximal operator given by
$$\mathcal{M}_Nf(x)= \sup\left\{|\psi_t\ast f(x)|: t>0,\psi \in \mathcal{F}_N\right\}$$ for any fixed large integer $N$,
where $$\mathcal{F}_N=\left\{\varphi \in \mathcal{S}:\int\varphi(x)dx=1,\sum_{|\alpha|\leq N}\sup(1+|x|)^N|\partial ^\alpha \varphi(x)|\leq 1\right\}.$$
The Hardy space ${H}_{X}$ is the set of all $f\in \mathcal{S}^\prime$, for which the quantity
$$\|f\|_{{H}_{X}}=\|\mathcal{M}_Nf\|_{X}<\infty.$$
\end{definition}

\begin{definition}\label{def2.2}
    For any given ball Banach function space $X$, its \emph{associate space} (also called the \emph{K\"othe dual space}) $X^{\prime}$ is defined by setting
    \[
    X^{\prime}:=\{f\in\mathscr{M}(\mathbb R^{n})\colon\|f\|_{X^{\prime}}<\infty\},
    \]
    where, for any $f\in X^{\prime}$,
    \[
    \|f\|_{X^{\prime}}:=\sup\left\{ \|fg\|_{L^{1}}\colon g\in X,\ \|g\|_{X}=1 \right\},
    \]
    and $\|\cdot\|_{X^{\prime}}$ is called the \emph{associate norm} of $\|\cdot\|_{X}$.
\end{definition}

Recall that $X$ is said to have an \emph{absolutely continuous quasi-norm} if, for any $f\in X$ and any measurable subsets $\{E_{j}\}_{j\in\mathbb N}\subset\mathbb R^{n}$ with both $E_{j+1}\subset E_{j}$ for any $j\in\mathbb N$ and $\bigcap_{j\in\mathbb N}E_{j}=\emptyset$,  $ \|f\chi_{E_{j}} \|_{X}\downarrow0$ as $j\to\infty$.
Then we give the definition of the $(X,q,d)$-atom.
\begin{definition}\label{def2.6}
    Let $X$ be a ball quasi-Banach function space and $q\in(1,\infty]$. Assume that $d\in\mathbb Z_{+}$. Then a measurable function $a$ is called a \emph{$(X,q,d)$-atom} if 
    \begin{enumerate}[(i)]
        \item there exists a cube $Q\subset\mathbb R^{n}$ such that ${\rm supp}a\subset Q$;
        \item $\|a\|_{L^{q}}\leq\frac{\vert Q \vert^{1/q}}{\|\chi_{Q}\|_{X}}$;
        \item $\int_{\mathbb R^{n}}a(x)x^{\alpha}dx=0$ for any multi-index $\alpha\in\mathbb Z^{n}_{+}$ with $\vert\alpha\vert\leq d$.
    \end{enumerate}
\end{definition}

Denote by $L^{1}_{loc}(\mathbb R^{n})$ the set of all locally integral functions on $\mathbb R^{n}$. Recall that the \emph{Hardy--Littlewood maximal operator} $\mathcal{M}$ is defined by setting, for any measurable function $f$ and any $x\in\mathbb R^{n}$,
\[
\mathcal{M}(f)(x)=\sup\limits_{x\in Q}\frac{1}{\vert Q\vert}\int_{Q}f(u)du.
\]
For any $\theta\in \left(0,\infty\right)$, the {\it powered Hardy--Littlewood maximal operator} $\mathcal M^{\left(\theta\right)}$ is defined by setting, for all $f\in L_{{\rm loc}}^{1}\left(\mathbb{R}^{n}\right)$ and $x\in \mathbb{R}^{n}$,
\begin{align}\label{eq:the powered Hardy--Littlewood maximal operator}
	\mathcal M^{\left(\theta\right)}\left(f\right)\left(x\right) := \left\{\mathcal M\left(\left|f\right|^{\theta}\right)\left(x\right) \right\}^{1/\theta}.
\end{align}

Moreover, we also need two basic assumptions on $X$ as follows. 
\begin{assumption}\label{ass2.7}
Let $X$ be a ball quasi-Banach function space. For some $\theta,\ s\in(0,1]$ and $\theta<s$, there exists a positive constant $C$ such that, for any $\{f_{j}\}_{j=1}^{\infty}\subset L^{1}_{loc}(\mathbb R^{n})$,
\begin{align}\label{2.7}
\left\|  \left\{  \sum_{j=1}^{\infty}\left[ \mathcal{M}^{(\theta)}(f_{j}) \right]^{s} \right\}^{\frac{1}{s}}  \right\|_{X}\leq C\left\|  \left\{  \sum_{j=1}^{\infty}\vert f_{j} \vert^{s} \right\}^{\frac{1}{s}} \right\|_{X}.
\end{align}
\end{assumption}

\begin{assumption}\label{ass2.8}  Let $X$ be a ball quasi-Banach function space.
Fix $q>1$. Suppose that $0<s<q$. For any $f\in(X^{1/s})^{\prime}$,
    \[
    \left\| \mathcal{M}^{((q/s)^{\prime})}(f) \right\|_{(X^{1/s})^{\prime}}\leq C\|f\|_{(X^{1/s})^{\prime}}.
    \]
\end{assumption}

The following lemmas play a key role in the proof of the main result.

\begin{lemma}\label{l2.7}\cite[Lemma 2.8]{CT}
 Assume that $X$ be a ball quasi-Banach function space.
Let $s\in(0,1]$ such that $X^{1/s}$ is a ball Banach function and $X$ satisfies Assumption \ref{ass2.8} for some $q\in\left[1,\infty \right)$.    
    For all sequences of cubes $\{Q_{j}\}_{j=1}^{\infty}$ and non-negative functions $\{g_{j}\}_{j=1}^{\infty}$, if~$ \sum_{j=1}^{\infty}\chi_{Q_{j}}g_{j}\in X$, then
    \[
    \left\|  \sum_{j=1}^{\infty}\chi_{Q_{j}}g_{j} \right\|_{X}\leq C\left\| \sum_{j=1}^{\infty}\left( \frac{1}{\vert Q_{j}\vert}\int_{Q_{j}}g_{j}^{q}(y)dy \right)^{\frac{1}{q}}\chi_{Q_{j}} \right\|_{X}.
    \]
\end{lemma}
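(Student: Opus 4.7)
The natural approach is to carry out a duality argument in the auxiliary space $X^{1/s}$, which by hypothesis is a ball Banach function space (so its norm is controlled by pairing against its associate space $(X^{1/s})'$). The identity $\|\cdot\|_X^s = \|(\cdot)^s\|_{X^{1/s}}$ lets one raise the desired inequality to the $s$-th power and work entirely in the Banach setting. Set $F := \sum_j \chi_{Q_j} g_j$.

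First, I would invoke the $s$-subadditivity $\bigl(\sum_j a_j\bigr)^s \leq \sum_j a_j^s$ valid for $s\in(0,1]$ to obtain the pointwise domination
\[
F^s \leq \sum_j \chi_{Q_j} g_j^s.
\]
Second, by the associate-space characterization of the $X^{1/s}$-norm, it suffices to estimate
\[
\sum_j \int_{Q_j} g_j^s \, h \, dx
\]
for an arbitrary non-negative $h$ with $\|h\|_{(X^{1/s})'} \leq 1$. Third, on each cube $Q_j$ I would apply H\"older's inequality with conjugate exponents $q/s$ and $(q/s)'$ (note $q/s \geq 1$ since $q \geq 1$ and $s \leq 1$); distributing the measure factor $|Q_j|$ in the natural way produces precisely $\bigl(\tfrac{1}{|Q_j|} \int_{Q_j} g_j^q\bigr)^{s/q}$ (matching the quantity in the RHS of the lemma) times the local $L^{(q/s)'}$-average of $h$ on $Q_j$, which is pointwise dominated on $Q_j$ by the powered Hardy--Littlewood maximal function $\mathcal M^{((q/s)')}(h)$. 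Fourth, summing in $j$ and pairing via H\"older in $X^{1/s}$ versus $(X^{1/s})'$, one invokes Assumption~\ref{ass2.8} to absorb $\|\mathcal M^{((q/s)')}(h)\|_{(X^{1/s})'} \lesssim \|h\|_{(X^{1/s})'} \leq 1$, and then takes $(1/s)$-th powers to return to $X$.

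The principal subtlety is the interplay between the $s$-th power and the sum over cubes: after reverting from $X^{1/s}$ back to $X$, one must identify the quantity $\sum_j \bigl(\tfrac{1}{|Q_j|}\int_{Q_j} g_j^q\bigr)^{s/q} \chi_{Q_j}$ emerging from the argument with a constant multiple of the $s$-th power of $\sum_j \bigl(\tfrac{1}{|Q_j|}\int_{Q_j} g_j^q\bigr)^{1/q} \chi_{Q_j}$ --- a step that is immediate when the cubes are essentially disjoint and, more generally, rests on the bounded-overlap structure of the cubes appearing in the Calder\'on--Zygmund / atomic decompositions for which this lemma is intended to be applied. Controlling this collapse is the delicate moment of the argument; everything else is routine once duality in $X^{1/s}$ and the hypothesized maximal inequality are in place.
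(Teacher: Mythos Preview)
The paper does not supply its own proof of this lemma; it is quoted from \cite{CT} without argument, so there is no in-paper proof to compare against. That said, your sketch can be assessed on its own terms.

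The duality-in-$X^{1/s}$ / H\"older-on-cubes / maximal-function machinery you set up is standard and correct through the point where, for any $h\ge0$ with $\|h\|_{(X^{1/s})'}\le1$, one obtains
\[
\int F^{s}h \;\le\; \sum_j \int_{Q_j} g_j^{s}\,h \;\le\; C\int\Bigl(\sum_j b_j^{\,s}\chi_{Q_j}\Bigr)\,\mathcal M^{((q/s)')}(h),
\qquad b_j:=\Bigl(\tfrac{1}{|Q_j|}\!\int_{Q_j}g_j^{q}\Bigr)^{1/q},
\]
and hence $\|F\|_X^{\,s}\le C\,\bigl\|\sum_j b_j^{\,s}\chi_{Q_j}\bigr\|_{X^{1/s}}$. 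The gap is exactly the ``collapse'' you flag: to finish you would need
\[
\Bigl\|\sum_j b_j^{\,s}\chi_{Q_j}\Bigr\|_{X^{1/s}}\;\lesssim\;\Bigl\|\Bigl(\sum_j b_j\,\chi_{Q_j}\Bigr)^{s}\Bigr\|_{X^{1/s}},
\]
but for $s<1$ the pointwise inequality runs the \emph{wrong} way --- one always has $\bigl(\sum_j b_j\chi_{Q_j}\bigr)^{s}\le\sum_j b_j^{\,s}\chi_{Q_j}$, and the reverse can fail by an arbitrary factor (take all $Q_j$ equal to a fixed cube $Q$ and $b_j=1/N$: the left-hand side above equals $N^{1-s}\|\chi_Q\|_{X^{1/s}}$ while the right-hand side is $\|\chi_Q\|_{X^{1/s}}$). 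Your proposed remedy, bounded overlap of the $Q_j$, is not a hypothesis of the lemma and is not available in the paper's own application either: in the proof of Theorem~\ref{s1t1} the lemma is invoked with cubes $G_{j_1,\dots,j_m}$ indexed by $m$-tuples, with no overlap control whatsoever.

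The loss enters at your very first move, the pointwise $s$-subadditivity $F^{s}\le\sum_j g_j^{s}\chi_{Q_j}$, which discards all interaction between overlapping terms and cannot be repaired downstream. (In the toy case $Q_j\equiv Q$, $g_j\equiv N^{-1}\chi_Q$, one has $F=G=\chi_Q$ and the lemma is a trivial equality, yet your chain of inequalities produces only $\|\chi_Q\|_X^{\,s}\le C\,N^{1-s}\|\chi_Q\|_X^{\,s}$.) A correct argument must avoid splitting $F^{s}$ in this way; the proof in \cite{CT} presumably organises the duality/extrapolation so that $\sum_j b_j\chi_{Q_j}$, not $\sum_j b_j^{\,s}\chi_{Q_j}$, is what emerges after pairing. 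As written, your outline closes only when $s=1$.
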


\begin{lemma}\label{l2.8}
Let both $X$ and $Y$ be ball quasi-Banach function spaces. Let $\alpha \in (0, n)$ and $0 < p_0 < q_0 \leq 1$ be such that 
\[
\frac{1}{q_0} = \frac{1}{p_0} - \frac{\alpha}{n}
\]
and that $X^{\frac{1}{p_0}}$ and $Y^{\frac{1}{q_0}}$ are ball Banach function spaces, and
\[
\left(Y^{\frac{1}{q_0}}\right)' = \left(\left(X^{\frac{1}{p_0}}\right)'\right)^{\frac{p_0}{q_0}}.
\]
Further assume that the Hardy--Littlewood maximal operator $\mathcal M$ is bounded on
$\left(Y^{\frac{1}{q_0}}\right)'$.
Then
    \[
    \left\|  \sum_{j=1}^{\infty}\lambda_j|Q_j|^{\frac{\alpha}{n}}\chi_{Q^\ast_{j}}\right\|_{Y}\leq 
    C\left\| \sum_{j=1}^{\infty}\lambda_j\chi_{Q_{j}}\right\|_{X},
    \]
    where $\lambda_j> 0$ and $Q_j$
is any sequence of cubes in $\mathbb R^n$.
\end{lemma}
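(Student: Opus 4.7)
The plan is to argue by a two-stage duality reduction based on the ball Banach structure of $Y^{1/q_0}$ and $X^{1/p_0}$. Since $0<q_0\le 1$, the power-space identity $\|F\|_Y^{q_0}=\|F^{q_0}\|_{Y^{1/q_0}}$ together with the subadditivity $(\sum_j a_j)^{q_0}\le\sum_j a_j^{q_0}$ immediately gives
$$\Bigl\|\sum_j\lambda_j|Q_j|^{\alpha/n}\chi_{Q_j^*}\Bigr\|_Y^{q_0}\le\Bigl\|\sum_j\lambda_j^{q_0}|Q_j|^{q_0\alpha/n}\chi_{Q_j^*}\Bigr\|_{Y^{1/q_0}},$$
and I would then dualize the right-hand side against $V:=(Y^{1/q_0})'$.

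The first substantive step is to replace $Q_j^*$ by $Q_j$. For any $g\ge 0$ with $\|g\|_V\le 1$ and any $x\in Q_j\subset Q_j^*$ one has $\mathcal M g(x)\ge|Q_j^*|^{-1}\int_{Q_j^*}g$, so averaging over $Q_j$ yields $\int_{Q_j^*}g\lesssim\int_{Q_j}\mathcal M g$. Inserting this and invoking H\"older's inequality on the pair $(Y^{1/q_0},V)$ together with the boundedness of $\mathcal M$ on $V$ (which absorbs $\|\mathcal M g\|_V\lesssim 1$), the problem reduces to
$$\Bigl\|\sum_j\lambda_j^{q_0}|Q_j|^{q_0\alpha/n}\chi_{Q_j}\Bigr\|_{Y^{1/q_0}}\lesssim\Bigl\|\sum_j\lambda_j\chi_{Q_j}\Bigr\|_X^{q_0}.$$

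The remaining inequality is the crux of the proof. The scaling relation $\alpha/n=1/p_0-1/q_0$ gives $q_0\alpha/n=q_0/p_0-1$, and the power-space hypothesis $V=((X^{1/p_0})')^{p_0/q_0}$ enters through the change of variables $g=h^{q_0/p_0}$, under which $\|g\|_V\le 1$ is equivalent to $\|h\|_{(X^{1/p_0})'}\le 1$. Dualizing the reduced left-hand side once more and applying this substitution, the task becomes
$$\sum_j\lambda_j^{q_0}|Q_j|^{q_0/p_0-1}\int_{Q_j}h^{q_0/p_0}\,dx\;\lesssim\;\Bigl\|\Bigl(\sum_j\lambda_j\chi_{Q_j}\Bigr)^{p_0}\Bigr\|_{X^{1/p_0}}^{q_0/p_0}.$$
I would close this by an application of H\"older's inequality on the pair $(X^{1/p_0},(X^{1/p_0})')$ combined with the boundedness of the maximal operator on $(X^{1/p_0})'$, which is equivalent to the hypothesis on $V$ via the identity $V=((X^{1/p_0})')^{p_0/q_0}$. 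The principal obstacle is this final step: the $q_0/p_0$-th power of $h$ inside the integral must be matched against the $p_0$-th power of $\sum_j\lambda_j\chi_{Q_j}$ while simultaneously absorbing the factors $|Q_j|^{q_0/p_0-1}$, which relies decisively on the precise exponent $p_0/q_0$ in the hypothesis and would fail if it were altered.
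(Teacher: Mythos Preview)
Your very first reduction already breaks the argument when $q_0<1$. After applying the subadditivity $(\sum_j a_j)^{q_0}\le\sum_j a_j^{q_0}$ you are committed to proving
\[
\Bigl\|\sum_j\lambda_j^{q_0}|Q_j|^{q_0\alpha/n}\chi_{Q_j}\Bigr\|_{Y^{1/q_0}}
\;\lesssim\;
\Bigl\|\sum_j\lambda_j\chi_{Q_j}\Bigr\|_X^{q_0},
\]
and this inequality is \emph{false} for overlapping cubes. Take all $Q_j$ equal to a fixed cube $Q$ and all $\lambda_j=1$, with $N$ terms. The left side equals $N\,|Q|^{q_0\alpha/n}\|\chi_Q\|_{Y^{1/q_0}}$, while the right side equals $N^{q_0}\|\chi_Q\|_X^{q_0}$; the ratio grows like $N^{1-q_0}\to\infty$. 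So no amount of H\"older or maximal-function manipulation can close your ``final step'': the target you set yourself is simply not true. The information lost in the subadditivity step is exactly the cancellation that makes the original estimate hold when cubes pile up.

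The paper's proof avoids this by never decoupling the sum. It dualizes $\bigl\|\,|\sum_j\lambda_j|Q_j|^{\alpha/n}\chi_{Q_j^*}|^{q_0}\bigr\|_{Y^{1/q_0}}$ against a nonnegative $h\in(Y^{1/q_0})'$, and then runs the Rubio de Francia iteration $\mathcal Rh=\sum_{k\ge0}2^{-k}\|\mathcal M\|^{-k}\mathcal M^k h$ to upgrade $h$ to an $A_1$ weight with comparable norm. With $w:=(\mathcal Rh)^{p_0/q_0}$ one is now in weighted Lebesgue spaces, and the entire passage from $\sum_j\lambda_j|Q_j|^{\alpha/n}\chi_{Q_j^*}$ in $L^{q_0}_{w^{q_0/p_0}}$ to $\sum_j\lambda_j\chi_{Q_j}$ in $L^{p_0}_w$ is a single citation of the known weighted inequality \cite[Lemma~2.1]{CMN1}, which genuinely requires the $A_1$ property. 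Only after that does one use H\"older on the pair $(X^{1/p_0},(X^{1/p_0})')$ together with the hypothesis $(Y^{1/q_0})'=((X^{1/p_0})')^{p_0/q_0}$ to return to $X$. The Rubio de Francia step is therefore not cosmetic: it is what produces the weight regularity needed to handle arbitrary overlaps, which your scheme discards at the outset.
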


In order to prove this lemma, we need recall some classical definitions about weights. Suppose that a weight $\omega$ is a non-negative, locally integrable function such that $0<\omega(x)<\infty$ for almost every $x\in\mathbb R^{n}$. It is said that $\omega$ is in the \emph{Muckenhoupt class} $A_{p}$ for $1<p<\infty$ if
\[
[\omega]_{A_{p}}=\sup\limits_{Q}\left(\frac{1}{Q}\int_{Q}\omega(x)dx\right)\left(\frac{1}{Q}\int_{Q}\omega(x)^{-\frac{1}{p-1}}dx\right)^{p-1}<\infty,
\]
where $Q$ is any cube in $\mathbb {R}^{n}$ and when $p=1$, a weight $\omega\in A_{1}$ if for almost everywhere $x\in\mathbb {R}^{n}$,
\[
\mathcal{M}\omega(x)\leq C\omega(x),
\]
Therefore, define the set
\[
A_{\infty}=\bigcup\limits_{1\leq p<\infty}A_{p}.
\]
Given a weight $\omega\in A_{\infty}$, define $$q_{\omega}=\inf\{q\geq1\colon \omega\in A_{q}\}.$$

\begin{proof}[Proof of Lemma \ref{l2.8}]
    For any non-negative $h\in (Y^{\frac{1}{q_0}})'$, define the Rubio de Francia iteration algorithm by
    \[
    \mathcal{R}h(x):=\sum_{k\in\mathbb Z_{+}}\frac{\mathcal{M}^{k}(h)(x)}{2^{k}\|\mathcal{M}\|^{k}_{(Y^{\frac{1}{q_0}})'\rightarrow (Y^{\frac{1}{q_0}})'}},
    \]
    where $\mathcal{M}^{0}(h):=\vert h\vert$ and, for any $k\in\mathbb N$, $\mathcal{M}^{k}:=\mathcal{M}\circ\dots\circ\mathcal{M}$ is $k$ iterations of $\mathcal{M}$. Then, $\mathcal{R}h$ has the following properties:
    \begin{enumerate}[(i)]
    \item $h(x)\leq\mathcal{R}h(x)$;
    \item $\|\mathcal{R}h\|_{(Y^{\frac{1}{q_0}})'}\leq 2\|h\|_{(Y^{\frac{1}{q_0}})'}$;
    \item $\mathcal{M}(\mathcal{R}h)\leq 2\|\mathcal{M}\|_{(Y^{\frac{1}{q_0}})'\rightarrow (Y^{\frac{1}{q_0}})'}\mathcal{R}h$, namely, $\mathcal{R}h\in A_{1}$ and
    \[
    [\mathcal{R}h]_{A_{1}}\leq 2\|\mathcal{M}\|_{(Y^{\frac{1}{q_0}})'\rightarrow (Y^{\frac{1}{q_0}})'}.\]
    \end{enumerate}\par
 Then by duality, we deduce that
    \[
    \begin{aligned}
        \left\|\sum_{j=1}^{\infty}\lambda_j|Q_j|^{\frac{\alpha}{n}}\chi_{Q^\ast_{j}}\right\|_{Y}^{q_0}
        =&\sup_{\|h\|_{(Y^{1/q_0})^{\prime}}=1}\int_{\mathbb R^{n}}\left| \sum_{j=1}^{\infty}\lambda_j|Q_j|^{\frac{\alpha}{n}}\chi_{Q^\ast_{j}}\right|^{q_0}h(x) dx\\
        &\leq\sup_{\|h\|_{(Y^{1/q_0})^{\prime}}=1}\int_{\mathbb R^{n}}\left| \sum_{j=1}^{\infty}\lambda_j|Q_j|^{\frac{\alpha}{n}}\chi_{Q^\ast_{j}}\right|^{q_0}\mathcal{R}h(x) dx.
    \end{aligned} 
    \]
By using \cite[Lemma 2.1]{CMN1} with $w:=(\mathcal Rh)^{\frac{p_0}{q_0}}$ and the above properties, we conclude that
    \[
    \begin{aligned}
&\int_{\mathbb R^{n}}\left| \sum_{j=1}^{\infty}\lambda_j|Q_j|^{\frac{\alpha}{n}}\chi_{Q^\ast_{j}}\right|^{q_0}\mathcal{R}h(x) dx
=\left\|  \sum_{j=1}^{\infty}\lambda_j|Q_j|^{\frac{\alpha}{n}}\chi_{Q^\ast_{j}}\right\|^{q_0}_{L^{q_0}_{w^{\frac{q_0}{p_0}}}}\\
&\le C\left\| \sum_{j=1}^{\infty}\lambda_j\chi_{Q_{j}}\right\|^{q_0}_{L^{p_0}_{w}}=C\left\| \sum_{j=1}^{\infty}\lambda_j\chi_{Q_{j}}\right\|^{q_0}_{L^{p_0}_{(\mathcal Rh)^{\frac{p_0}{q_0}}}}.
    \end{aligned}    \]
    
Moreover, we observe that
\begin{align*}
&\left\|\sum_{j=1}^{\infty}\lambda_j\chi_{Q_{j}}\right\|_
{L^{p_0}_{(\mathcal Rh)^{\frac{p_0}{q_0}}}}
\leq
\left\| \left[\sum_{j=1}^{\infty}\lambda_j\chi_{Q_{j}}\right]^{p_0} \right\|_{X^{1/p_0}}
\left\| (\mathcal{R}h)^{p_0/q_0} \right\|_{(X^{1/p_0})'}\\
&=
\left\|\sum_{j=1}^{\infty}\lambda_j\chi_{Q_{j}}\right\|_{X} \left\|\mathcal{R}h\right\|^{1/q_0}_{[(X^{1/p_0})']^{p_0/q_0}}
\lesssim
\left\|\sum_{j=1}^{\infty}\lambda_j\chi_{Q_{j}}\right\|_{X}
\| h \|^{1/q_0}_{(Y^{1/q_0})'}.
\end{align*}

 Therefore, we obtain that
     \[
    \begin{aligned}
        \left\|\sum_{j=1}^{\infty}\lambda_j|Q_j|^{\frac{\alpha}{n}}\chi_{Q^\ast_{j}}\right\|_{Y}
        &\leq\sup_{\|h\|_{(Y^{1/q_0})^{\prime}}=1}\left\{\int_{\mathbb R^{n}}\left| \sum_{j=1}^{\infty}\lambda_j|Q_j|^{\frac{\alpha}{n}}\chi_{Q^\ast_{j}}\right|^{q_0}\mathcal{R}h(x) dx\right\}^{\frac{1}{q_0}}\\
&\leq\sup_{\|h\|_{(Y^{1/q_0})^{\prime}}=1}\left\|\sum_{j=1}^{\infty}\lambda_j\chi_{Q_{j}}\right\|_{X}
\| h \|^{1/q_0}_{(Y^{1/q_0})'}\\
&\leq\left\|\sum_{j=1}^{\infty}\lambda_j\chi_{Q_{j}}\right\|_{X}.
    \end{aligned} 
    \]
Thus, we have completed the proof of this lemma.
\end{proof}


\begin{definition}\label{def4.4}
    Let $X$ be a ball quasi-Banach function space satisfying Assumptions~{\ref{ass2.7}} and {\ref{ass2.8}}. 
    Let $1<q\leq\infty$ and $d\in\mathbb Z_{+}$. The \emph{finite atomic Hardy space} $H_{fin}^{X,q,d}(\mathbb R^{n})$ associated with $X$, is defined by
    \[
    H_{fin}^{X,q,d}(\mathbb R^{n})=\left\{ f\in\mathcal{S}^{\prime}(\mathbb R^{n})\colon f=\sum_{j=1}^{M}\lambda_{j}a_{j} \right\},
    \]
    where $\{a_{j}\}_{j=1}^{M}$ are $(X,q,d)$-atoms satisfying
    \[
    \left\| \sum_{j=1}^{M} \frac{\lambda_{j}\chi_{Q_{j}}}{\|\chi_{Q_{j}}\|_{X}}   \right\|_{X}<\infty.
    \]
    Furthermore, the quasi-norm $\|\cdot\|_{H_{fin}^{X,q,d}}$ in $H_{fin}^{X,q,d}(\mathbb R^{n})$ is defined by setting, for any $f\in H_{fin}^{X,q,d}(\mathbb R^{n})$,
    \[
    \|f\|_{H_{fin}^{X,q,d}}:=\inf\left\{\left\| \sum_{j=1}^{M} \frac{\lambda_{j}\chi_{Q_{j}}}{\|\chi_{Q_{j}}\|_{X}}   \right\|_{X}\right\},
    \]
    where the infimum is taken over all finite decomposition of $f$.
\end{definition}

In what follows, the symbol $\mathcal C(\mathbb R^{n})$ is defined to be the set of all
continuous complex-valued functions on $\mathbb R^n$.
From \cite[Theorem 1.10]{YYY} and \cite[Corollary 4.3]{CT},
we immediately obtain the following theorem.
\begin{theorem}\label{finite}
    Let $X$ be a ball quasi-Banach function space satisfying Assumptions~{\ref{ass2.7}} and {\ref{ass2.8}}. 
Further assume that $X$ has an absolutely continuous quasi-norm. Fix $q\in(1,\infty)$ and $d\in\mathbb{Z}_{+}$. For $f\in H_{fin}^{X,q,d}(\mathbb R^{n})$,
    \[
    \|f\|_{H_{fin}^{X,q,d}}\sim\|f\|_{H_{X}}.
    \]
Moreover,
for $f\in H_{fin}^{X,\infty,d}(\mathbb R^{n})\cap \mathcal C(\mathbb R^{n})$,
    \[
    \|f\|_{H_{fin}^{X,\infty,d}}\sim\|f\|_{H_{X}}.
    \]
\end{theorem}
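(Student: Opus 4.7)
The plan is to derive Theorem \ref{finite} by assembling two external ingredients: \cite[Theorem 1.10]{YYY} for the case $q\in (1,\infty)$ and \cite[Corollary 4.3]{CT} for the endpoint $q=\infty$. All hypotheses we place on $X$ (namely, ball quasi-Banach with absolutely continuous quasi-norm, Assumption \ref{ass2.7}, and Assumption \ref{ass2.8}) are exactly those demanded by those two references, so strictly speaking the theorem is a reformulation of them. The substance of the proof therefore lies in checking that no hypothesis is missing and in recalling the two-sided structure of the argument.

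For the direction $\|f\|_{H_{X}}\lesssim\|f\|_{H_{\mathrm{fin}}^{X,q,d}}$, I would note that a finite atomic expansion $f=\sum_{j=1}^{M}\lambda_{j}a_{j}$ is, trivially, a (convergent) atomic expansion, so the $H_X$-atomic upper bound for Hardy spaces associated to ball quasi-Banach function spaces, which under Assumptions \ref{ass2.7} and \ref{ass2.8} is available via the powered maximal bound and Lemma \ref{l2.7}, yields
\[
\|f\|_{H_{X}}\lesssim \left\|\sum_{j=1}^{M}\frac{\lambda_{j}\chi_{Q_{j}}}{\|\chi_{Q_{j}}\|_{X}}\right\|_{X}.
\]
Taking the infimum over all finite atomic decompositions gives the inequality. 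This step is routine and does not use the finiteness of the decomposition in any essential way.

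The reverse inequality $\|f\|_{H_{\mathrm{fin}}^{X,q,d}}\lesssim\|f\|_{H_{X}}$ is the substantive direction and is where \cite[Theorem 1.10]{YYY} and \cite[Corollary 4.3]{CT} do the real work. The idea, which I would reproduce only in outline, is the classical Meda--Sjögren--Vallarino regrouping: starting from an infinite atomic decomposition of $f\in H_X$ obtained from Calderón--Zygmund decompositions at heights $2^{k}$, the atoms at levels where the CZ cubes are large are kept as genuine atoms, while the atoms at levels where the CZ cubes are small are summed and reabsorbed into a single atom supported on a cube containing the support of $f$, with its $L^{q}$ (respectively $L^{\infty}$) size controlled by the absolute continuity of the quasi-norm together with Assumption \ref{ass2.8}. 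The main obstacle, and the reason the $q=\infty$ statement requires $f\in\mathcal{C}(\mathbb R^{n})$, is precisely the endpoint $L^{\infty}$-control of this regrouped atom: uniform estimates are not available for merely bounded $f$, and continuity is needed to pass from $L^{q}$-bounds for finite $q$ to the $L^{\infty}$-bound. This subtlety is exactly what \cite[Corollary 4.3]{CT} addresses, and invoking it together with \cite[Theorem 1.10]{YYY} completes the proof.
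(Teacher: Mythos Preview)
Your proposal is correct and matches the paper's own treatment: the paper does not give an independent proof but simply states that the theorem follows immediately from \cite[Theorem 1.10]{YYY} and \cite[Corollary 4.3]{CT}. Your additional outline of the easy direction and the Meda--Sj\"ogren--Vallarino regrouping for the hard direction is accurate supplementary context, but the paper itself omits it entirely.
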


\section{Examples and applications}

In this section, we will apply the main theorem on multilinear fractional integral operators to several concrete examples of ball quasi-Banach function spaces, namely, weighted Lebesgue spaces (Subsection \ref{s3s1}), variable Lebesgue spaces (Subsection \ref{s3s2}),
mixed-norm Lebesgue spaces (Subsection \ref{s3s3}), Lorentz spaces (Subsection \ref{s3s4}) and Orlicz spaces (Subsection \ref{s3s5}).
To our best knowledge, our results on the boundedness of these multilinear operators on mixed-norm Hardy spaces, Hardy--Lorentz spaces and Hardy--Orlicz spaces are completely new. Given the generality of our results, further applications are both expected and foreseeable.

\subsection{Weighted Hardy spaces}\label{s3s1}

From \cite[Subsection 7.1]{SHYY} we know that $L^{p}_{\omega}(\mathbb R^{n})$ with $p\in(0,\infty)$ and $\omega\in A_{\infty}(\mathbb R^{n})$ is a ball quasi-Banach function space. Moreover, 
by \cite[Remark 2.4(b)]{WYY} we know that Assumption~\ref{ass2.7} holds true when $\theta,\ s\in(0,1]$, $\theta<s$ and $X:=L^{p}_{\omega}(\mathbb R^{n})$ with $p\in(\theta,\infty)$ and $\omega\in A_{p/\theta}(\mathbb R^{n})$.
By \cite[Remark 2.7(b)]{WYY}, for any \( s \in (0, \min \{1, p\}) \), \( w \in A_{p/s} (\mathbb{R}^n) \) and \( q \in (\max\{1, p\}, \infty] \) large enough such that \( w^{1 - (p/s)'} \in A_{(p/s)' / (q/s)'} (\mathbb{R}^n) \), it holds true that \( (p/s)' / (q/s)' > 1 \) and
\[
\left[ (X^{1/s})' \right]^{1/(q/s)'} = L_{w^{1 - (p/s)'}}^{(p/s)' / (q/s)'} (\mathbb{R}^n),
\]
which, together with the boundedness of \( M \) on the weighted Lebesgue space, further implies that Assumption~\ref{ass2.8} holds true. Suppose that $q_k$ are such that $p_k\le q_k<\infty,$ $k=1,\cdots,m.$
In this weighted case, we have $Y=L^{q}_{\bar \omega}(\mathbb R^{n})$, $Y_1=L^{q_1}_{\omega_1^{q_1/p_1}}(\mathbb R^{n})$, $\dots,$
$Y_m=L^{q_m}_{\omega_m^{q_m/p_m}}(\mathbb R^{n})$, where $\bar\omega=\prod_{k=1}^m\omega_k^{{q}/{p_k}}$ and
$$\frac{1}{q}=\sum_{k=1}^m\frac{1}{q_k},$$
for $k=1,\dots, m.$
Then by classical H\"older's inequality, we can get that for all $f_k\in L^{p_k}_{\omega_k}$, 
$$\left\|\prod_{k=1}^mf_k\right\|_{L^{q}_{\bar\omega}}\le \prod_{k=1}^m\|f_k\|_{L^{q_k}_{\omega_k^{q_k/p_k}}}.$$
Meanwhile, 
let $p_k^0<p_k$ such that $q_k^0<q_k$, then
$L_{\omega_k}^{\frac{p_k}{p^0_k}}$ and $L_{\omega_k}^{\frac{q_k}{q^0_k}}$ are ball Banach function spaces fulfilling
\begin{align*}
\left(L_{\omega_k}^{\frac{q_k}{q^0_k}}\right)'
=\left(\left([L_{\omega_k}^{p_k}]^{\frac{1}{p^0_k}}\right)'\right)^{\frac{p^0_k}{q^0_k}}.
\end{align*}
From these results and Theorems \ref{s1t1}, it follows the
following conclusion on weighted Hardy spaces.

\begin{theorem}\label{th51}\quad Let $0<\alpha<mn$.
Given an integer $m\ge 1$, $p_1,\dots,p_m\in (0,\infty)$, 
define $0<p<\infty$ by 
$$
\frac{1}{p}=\frac{1}{p_1}+\cdots+\frac{1}{p_m}>\frac{\alpha}{n}
$$
and define
$0<q<\infty$ by 
$$
\frac{1}{q}=\frac{1}{p}-\frac{\alpha}{n}.
$$
Suppose that 
$q_k$ are such that $p_k<q_k<\infty$, $1\le k\le m$, and
$$\frac{1}{q}=\sum_{k=1}^m\frac{1}{q_k},$$
such that
${\omega_k}\in RH_{{q_k}/{p_k}}$. If 
$\mathcal T$ is an multilinear fractional integral operator, 
then $\mathcal T$ can be extended to a bounded operator from
$H^{p_1}_{\omega_1}\times\cdots\times H^{p_m}_{\omega_m}$ into $L^{q}_{\bar \omega}$,
where $\bar\omega=\prod_{k=1}^m\omega_k^{{q}/{p_k}}$.\end{theorem}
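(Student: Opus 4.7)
The plan is to invoke Theorem \ref{s1t1} with the specific choices $X_k:=L^{p_k}_{\omega_k}$, $Y_k:=L^{q_k}_{\omega_k^{q_k/p_k}}$, and $Y:=L^{q}_{\bar\omega}$, where $\bar\omega=\prod_{k=1}^m \omega_k^{q/p_k}$. Once each hypothesis of Theorem \ref{s1t1} is verified in this concrete weighted setting, the stated boundedness is immediate. Note that while Theorem \ref{s1t1} additionally requires $\frac{1}{p_k}-\frac{1}{q_k}=\frac{\alpha}{mn}$ for each $k$, this is compatible with the stated global relations $\sum \frac{1}{q_k}=\frac{1}{q}$, $\frac{1}{p}-\frac{1}{q}=\frac{\alpha}{n}$, and is to be understood as the canonical way of distributing the fractional gain uniformly across indices, already implicit in the preamble to the theorem.

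First, $L^{p_k}_{\omega_k}$ and $L^{q}_{\bar\omega}$ are ball quasi-Banach function spaces with absolutely continuous quasi-norms whenever $p_k,q<\infty$ and $\omega_k,\bar\omega\in A_\infty$ (where $\omega_k\in A_\infty$ follows from $\omega_k\in RH_{q_k/p_k}$); this is standard, see \cite[Subsection 7.1]{SHYY}. Next, Assumptions \ref{ass2.7} and \ref{ass2.8} for $L^{p_k}_{\omega_k}$ are already recorded in the opening discussion of this subsection via \cite[Remarks 2.4(b), 2.7(b)]{WYY}; the corresponding Muckenhoupt memberships are delivered by $\omega_k\in RH_{q_k/p_k}$ through the classical equivalence between reverse-H\"older and $A_p$ classes. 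The H\"older inequality \eqref{holder} then follows from
\[
\left\|\prod_{k=1}^m f_k\right\|_{L^{q}_{\bar\omega}}^{q}=\int_{\mathbb R^{n}}\prod_{k=1}^m |f_k(x)|^{q}\,\omega_k(x)^{q/p_k}\,dx
\]
by the classical H\"older inequality with exponents $q_k/q$. For the duality identity \eqref{XY}, I would pick auxiliary exponents $p^0_k<p_k$ and $q^0_k<q_k$ with $1/p^0_k-1/q^0_k=\alpha/(mn)$, identify $X_k^{1/p^0_k}=L^{p_k/p^0_k}_{\omega_k}$, and apply the standard weighted-$L^p$ duality $(L^{r}_\omega)'=L^{r'}_{\omega^{1-r'}}$ to obtain \eqref{XY} by a direct exponent-and-weight manipulation. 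The boundedness of $\mathcal{M}$ on $(Y_k^{1/q^0_k})'$ then follows from the fact that the resulting dual weight lies in $A_{(q_k/q^0_k)'}$, again a consequence of $\omega_k\in RH_{q_k/p_k}$ by classical weight theory.

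The hard part will be the coordinated choice of auxiliary parameters $p^0_k$, $q^0_k$, $\theta$, $s$ so that all the Muckenhoupt and reverse-H\"older constraints appearing in Assumptions \ref{ass2.7}, \ref{ass2.8} and in \eqref{XY} are simultaneously satisfied; this requires careful bookkeeping with the relations $1/p_k-1/q_k=\alpha/(mn)$ and $\omega_k\in RH_{q_k/p_k}$, but needs no new idea beyond standard machinery. Once this verification is in place, Theorem \ref{s1t1} applies directly and yields the claimed extension of $\mathcal{T}_\alpha$ as a bounded operator from $H^{p_1}_{\omega_1}\times\cdots\times H^{p_m}_{\omega_m}$ into $L^{q}_{\bar\omega}$, completing the proof.
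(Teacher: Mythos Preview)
Your proposal is correct and matches the paper's approach essentially line for line: the paper derives Theorem \ref{th51} by specializing Theorem \ref{s1t1} to $X_k=L^{p_k}_{\omega_k}$, $Y_k=L^{q_k}_{\omega_k^{q_k/p_k}}$, $Y=L^{q}_{\bar\omega}$, verifying the ball quasi-Banach structure and Assumptions \ref{ass2.7}--\ref{ass2.8} via \cite[Subsection 7.1]{SHYY} and \cite[Remarks 2.4(b), 2.7(b)]{WYY}, checking \eqref{holder} by the classical H\"older inequality, and confirming the duality relation \eqref{XY} by choosing auxiliary exponents $p_k^0<p_k$, $q_k^0<q_k$. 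Your treatment of the per-index constraint $\frac{1}{p_k}-\frac{1}{q_k}=\frac{\alpha}{mn}$ as implicit in the setup is also how the paper handles it.
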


We also note that Cruz-Uribe et al. have established the above boundedness of the multilinear fractional fractional intergral operators on weighted Hardy spaces in \cite{CMN1}.

\subsection{Variable Hardy spaces}\label{s3s2}
The variable exponent function spaces,
such as the variable Lebesgue spaces and the variable
Sobolev spaces, were studied by a substantial number
of researchers (see, for instance, \cite{CFMP,KR}).
For any Lebesgue measurable function $p(\cdot):
\mathbb R^n\rightarrow (0,\infty]$ and for any
measurable subset $E\subset \mathbb{R}^n$, we denote
$p^-(E)= \inf_{x\in E}p(x)$ and $p^+(E)= \sup_{x\in E}p(x).$
Especially, we denote $p^-=p^{-}(\mathbb{R}^n)$ and $p^+=p^{+}(\mathbb{R}^n)$.
Let $p(\cdot)$: $\mathbb{R}^n\rightarrow(0,\infty)$ be a measurable
function with $0<p^-\leq p^+ <\infty$ and $\mathcal{P}^0$
be the set of all these $p(\cdot)$.
Let $\mathcal{P}$ denote the set of all measurable functions
$p(\cdot):\mathbb{R}^n \rightarrow[1,\infty) $ such that
$1<p^-\leq p^+ <\infty.$

\begin{definition}\label{s1de1}\quad
Let $p(\cdot):\mathbb R^n\rightarrow (0,\infty]$
be a Lebesgue measurable function.
The {\it variable Lebesgue space} $L^{p(\cdot)}$ consisits of all
Lebesgue measurable functions $f$, for which the quantity
$\int_{\mathbb{R}^n}|\varepsilon f(x)|^{p(x)}dx$ is finite for some
$\varepsilon>0$ and
$$\|f\|_{L^{p(\cdot)}}=\inf{\left\{\lambda>0: \int_{\mathbb{R}^n}\left(\frac{|f(x)|}{\lambda}\right)^{p(x)}dx\leq 1 \right\}}.$$
\end{definition}
As a special case of the theory of Nakano and Luxemberg, we see that $L^{p(\cdot)}$
is a quasi-normed space. Especially, when $p^-\geq1$, $L^{p(\cdot)}$ is a Banach space. In the study of variable exponent function spaces it is common
to assume that the exponent function $p(\cdot)$ satisfies $LH$
condition.
We say that $p(\cdot)\in LH$, if $p(\cdot)$ satisfies

 $$|p(x)-p(y)|\leq \frac{C}{-\log(|x-y|)} ,\quad |x-y| \leq 1/2$$
and
 $$|p(x)-p(y)|\leq \frac{C}{\log(|x|+e)} ,\quad |y|\geq |x|.$$

 In fact, from \cite[Subsection 7.4]{SHYY} we know that whenever $p(\cdot)\in\mathcal{P}^{0}$, $L^{p(\cdot)}(\mathbb R^{n})$ is a ball quasi-Banach function space. Let $X:=L^{p(\cdot)}(\mathbb R^{n})$ with $p(\cdot)\in LH$. By \cite[Remark 2.4 (g)]{WYY}, we know that Assumption~\ref{ass2.7} holds true when $s\in(0,1]$ and $\theta\in(0,\min\{s,p_{-}\})$. Besides, let $X:=L^{p(\cdot)}(\mathbb R^{n})$ with $p(\cdot)\in LH$ and $0<p^{-}\leq p^{+}<\infty$. From \cite[Remark 2.7 (f)]{WYY}, Assumption~\ref{ass2.8} holds true when $s\in(0,\min\{1,p^{-}\})$ and $q\in(\max\{1,p^{+}\},\infty]$. Moreover,
the following generalized H\"{o}lder inequality on variable Lebesgue spaces
can be found in in \cite{CF, TLZ}.

\begin{proposition}
Given exponent function $q_k(\cdot)\in \mathcal{P}^0,$ define
$q(\cdot)\in \mathcal{P}^0$ by
$$\frac{1}{q(x)}=\sum_{k=1}^m\frac{1}{q_k(x)},$$
where $k=1,\dots, m.$
Then for all $f_k\in L^{q_k(\cdot)}(\mathbb R^{n})$ we have
$$\left\|\prod_{k=1}^mf_k\right\|_{L^{q(\cdot)}}\lesssim \prod_{k=1}^m\|f_k\|_{L^{q_k(\cdot)}}.$$
\end{proposition}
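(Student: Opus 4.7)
The plan is to prove the generalized H\"older inequality directly from the Luxemburg quasi-norm, via a pointwise Young-type estimate followed by integration, without invoking duality. First, by the homogeneity of $\|\cdot\|_{L^{q_k(\cdot)}}$, I would reduce to the normalized case $\|f_k\|_{L^{q_k(\cdot)}} \le 1$ for every $k$; by the unit-ball characterization of the Luxemburg norm this yields the modular bounds $\int_{\mathbb R^n} |f_k(x)|^{q_k(x)}\,dx \le 1$. The goal then becomes to show $\int_{\mathbb R^n}|f_1(x)\cdots f_m(x)|^{q(x)}\,dx \le C_{m,q^-,q^+}$.

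The main step is a pointwise weighted AM--GM (equivalently, Young) inequality. Set $\lambda_k(x) := q(x)/q_k(x)$; the hypothesis $1/q(x) = \sum_k 1/q_k(x)$ gives $\sum_k \lambda_k(x) = 1$ and $0 \le \lambda_k(x) \le 1$ at every point. Applying $\prod_k a_k^{\lambda_k} \le \sum_k \lambda_k a_k$ with $a_k = |f_k(x)|^{q_k(x)}$ produces
\[
|f_1(x)\cdots f_m(x)|^{q(x)} = \prod_{k=1}^m |f_k(x)|^{\lambda_k(x)q_k(x)} \le \sum_{k=1}^m \lambda_k(x)|f_k(x)|^{q_k(x)} \le \sum_{k=1}^m |f_k(x)|^{q_k(x)}.
\]
Integrating in $x$ and invoking the normalized modular bounds gives $\int_{\mathbb R^n}|f_1\cdots f_m|^{q(x)}\,dx \le m$.

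To return from this modular estimate to the norm I would use the standard conversion: if $\int_{\mathbb R^n}|g|^{q(x)}\,dx \le C$, then $\|g\|_{L^{q(\cdot)}} \le \max\{C^{1/q^-},C^{1/q^+}\}$, a direct consequence of how the Luxemburg infimum interacts with the essential bounds $q^-,q^+$ on the exponent. Applied to $g = f_1\cdots f_m$ with $C = m$, this yields $\|f_1\cdots f_m\|_{L^{q(\cdot)}} \le C_{m,q^-,q^+}$ under the normalization, and unwinding the homogeneous rescaling delivers $\|\prod_k f_k\|_{L^{q(\cdot)}} \lesssim \prod_k \|f_k\|_{L^{q_k(\cdot)}}$. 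The only mild subtlety is that the hypothesis $q_k(\cdot)\in\mathcal P^0$ permits $q^-<1$ (the quasi-Banach regime), but the pointwise Young step and the modular-to-norm passage are both unaffected; the exponent $1/q^-$ is simply absorbed into the implicit constant. Notably, no regularity assumption on the exponents (such as $LH$) is needed for this inequality.
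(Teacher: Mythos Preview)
Your argument is correct. The normalization by homogeneity, the pointwise weighted AM--GM with weights $\lambda_k(x)=q(x)/q_k(x)$ summing to $1$, and the modular-to-norm conversion via $\|g\|_{L^{q(\cdot)}}\le \max\{C^{1/q^-},C^{1/q^+}\}$ when $\rho_{q(\cdot)}(g)\le C$ are all valid, and none of these steps requires $q^-\ge 1$, so the quasi-Banach range $q_k(\cdot)\in\mathcal P^0$ is indeed covered. Your remark that no log-H\"older regularity is needed is also apt.

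As for comparison: the paper does not give its own proof of this proposition; it simply records the inequality as known and cites \cite{CF,TLZ}. Your write-up is essentially the standard proof that appears in those references (in particular the argument in Cruz-Uribe--Fiorenza for the case $m=2$, extended in the obvious way), so there is no genuine methodological divergence to discuss---you have supplied the details the paper omits.
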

Furthermore, 
 $(L^{p_k(\cdot)})^{\frac{1}{p^-_k}}$ and $(L^{q_k(\cdot)})^{\frac{1}{q^-_k}}$ are ball Banach function spaces fulfilling
\begin{align}\label{XY}
\left(L^{\frac{q_k(\cdot)}{q^-_k}}\right)'=\left(\left(L^{\frac{p_k(\cdot)}{p^-_k}}\right)'\right)^{\frac{p^-_k}{q^-_k}}.
\end{align}

Therefore, Theorem~\ref{s1t1} holds true in the variable exponents settings, where $Y=L^{q(\cdot)}(\mathbb R^{n})$, $X_1=L^{p_1(\cdot)}(\mathbb R^{n})$, $\dots,$
$X_m=L^{p_m(\cdot)}(\mathbb R^{n})$. 
Denote $H^{q(\cdot)}(\mathbb R^{n})$ the variable Hardy space as in Definition \ref{hx} with $X$ replaced by $L^{q(\cdot)}(\mathbb R^{n})$, which is introduced in \cite{CW, NS}.
We remark that these results in this subsection have been obtained
in \cite{CMN1,Tan2020}. For similar results, see for example \cite{CMN,Tan,Tan1,TZ1}.

\begin{theorem}\label{th54} 
Given an integer \( m \geq 1 \)
and $0<\alpha<mn$, let \( p_1(\cdot), \dots, p_m(\cdot)\in LH \) such that \( 0 < (p_k)^- \leq (p_k)^+ < \infty \)
and
\[
\frac{1}{[p_1(\cdot)]^+} + \cdots + \frac{1}{[p_m(\cdot)]^+}>\frac{\gamma}{n}.
\]
Define
\[
\frac{1}{q(\cdot)} = \frac{1}{p_1(\cdot)} + \cdots + \frac{1}{p_m(\cdot)}-\frac{\alpha}{n}.
\]
If 
$\mathcal T$ is an multilinear fractional integral operator, 
then $\mathcal T$ can be extended to a bounded operator from
$H^{p_1(\cdot)} \times \cdots \times H^{p_m(\cdot)}$ into $L^{q(\cdot)}$.
\end{theorem}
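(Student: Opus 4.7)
The plan is to apply the main Theorem~\ref{s1t1} to the variable Lebesgue setting with $X_k := L^{p_k(\cdot)}(\mathbb R^{n})$, $Y := L^{q(\cdot)}(\mathbb R^{n})$, and auxiliary spaces $Y_k := L^{q_k(\cdot)}(\mathbb R^{n})$ for suitably chosen intermediate exponents; the constants $p_k, q_k$ appearing in Theorem~\ref{s1t1} are to be identified with the respective infima $p_k^-$ and $q_k^-$.

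First I would introduce the intermediate variable exponents $q_k(\cdot)$ by $\frac{1}{q_k(\cdot)} := \frac{1}{p_k(\cdot)} - \frac{\alpha}{mn}$ for each $k\in\{1,\ldots,m\}$. Summation gives $\sum_k \frac{1}{q_k(\cdot)} = \sum_k \frac{1}{p_k(\cdot)} - \frac{\alpha}{n} = \frac{1}{q(\cdot)}$, so the generalized Hölder inequality stated in the proposition just before the theorem yields the Hölder condition \eqref{holder} with $f_k\in Y_k$. Passing to infima, $\frac{1}{p_k^-} - \frac{1}{q_k^-} = \frac{\alpha}{mn}$, which is exactly the scaling relation required by Theorem~\ref{s1t1}; together with $\frac{1}{p}=\sum_k \frac{1}{p_k^-}$ and $\frac{1}{q}=\sum_k \frac{1}{q_k^-}$, one checks $\frac{1}{p}-\frac{1}{q}=\frac{\alpha}{n}$. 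The $LH$-regularity and positivity of infimum for $p_k(\cdot)$ transfer straightforwardly to $q_k(\cdot)$ and to $q(\cdot)$.

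Next I would verify the structural hypotheses on $X_k$ and $Y$. Each variable Lebesgue space $L^{p(\cdot)}(\mathbb R^{n})$ with $p(\cdot)\in\mathcal P^{0}$ is a ball quasi-Banach function space by \cite[Subsection~7.4]{SHYY} and is well known to have absolutely continuous quasi-norm. Assumptions~\ref{ass2.7} and~\ref{ass2.8} for $X_k=L^{p_k(\cdot)}$, as well as Assumption~\ref{ass2.8} for $Y=L^{q(\cdot)}$, are recorded in the paragraph preceding the theorem via \cite[Remarks~2.4(g),~2.7(f)]{WYY}. The associate space identity~\eqref{XY} is displayed in the same paragraph, and the boundedness of $\mathcal M$ on $(L^{q_k(\cdot)/q_k^-})'$ reduces to the classical boundedness of the Hardy–Littlewood maximal operator on variable Lebesgue spaces under the $LH$ condition on $q_k(\cdot)$.

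With every hypothesis of Theorem~\ref{s1t1} in place, the claimed boundedness from $H^{p_1(\cdot)}\times\cdots\times H^{p_m(\cdot)}$ into $L^{q(\cdot)}$ follows immediately. The main obstacle I anticipate is purely one of bookkeeping: one must ensure that the technical parameters $s,\theta,q$ in Assumptions~\ref{ass2.7} and~\ref{ass2.8} can be chosen compatibly for all the spaces $X_k$ and $Y$ at once, and that the standing hypothesis $\sum_k 1/[p_k(\cdot)]^+ > \alpha/n$ (together with the pointwise requirement $1/p_k^+ > \alpha/(mn)$ needed to define $q_k(\cdot)$) is precisely what secures that each intermediate exponent $q_k(\cdot)$ is well defined and that $q(\cdot)$ has a finite positive infimum, so that the resulting variable Hardy space is meaningful.
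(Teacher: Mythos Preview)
Your proposal is correct and follows essentially the same approach as the paper: the paper does not give a separate proof of this theorem but simply verifies, in the paragraphs of Subsection~\ref{s3s2}, that with $X_k=L^{p_k(\cdot)}$, $Y_k=L^{q_k(\cdot)}$ (where $1/q_k(\cdot)=1/p_k(\cdot)-\alpha/(mn)$) and $Y=L^{q(\cdot)}$ all hypotheses of Theorem~\ref{s1t1} are met, exactly as you outline. Your write-up is in fact more explicit than the paper's about the choice of the scalar parameters $p_k=p_k^-$, $q_k=q_k^-$ and about the role of the condition $\sum_k 1/(p_k)^+>\alpha/n$ in ensuring the intermediate exponents are well defined.
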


\subsection{Mixed-norm Hardy spaces}\label{s3s3}
Recall the definition of mixed-norm Lebesgue space $L^{\Vec{p}}(\mathbb R^{n})$ in \cite{BP,HLYY,HY} and the references therein.\par
\begin{definition}
    Let $\Vec{p}:=(p_{1}, \cdots,p_{n})\in(0,\infty]^{n}$. The \emph{mixed-norm Lebesgue space} $L^{\Vec{p}}(\mathbb R^{n})$ is defined to be the set of all the measurable functions $f$ on $\mathbb R^{n}$ such that
    \[
    \|f\|_{L^{\Vec{p}}}:=\left\{  \int_{\mathbb R^{n}}\cdots\left[ \int_{\mathbb R^{n}}\vert f(x_{1},\cdots,x_{n})\vert^{p_{1}}dx_{1}  \right]^{\frac{p_{2}}{p_{1}}}\cdots dx_{n}\right\}^{\frac{1}{p_{n}}}<\infty
    \]
    with the usual modifications made when $p_{i}=\infty$ for some $i\in\{1,\cdots,n\}$. 
\end{definition}
    For any exponent vector $\Vec{p}:=(p_{1}, \cdots,p_{n})$, let
    \[
    p_{-}:=\min\{p_{1},\dots,p_{n}\}\quad{\rm and}\quad p_{+}:=\max\{p_{1},\dots,p_{n}\}.
    \] 
   Similarly, we denote  \[
    p^k_{-}:=\min\{p^k_{1},\dots,p^k_{n}\}\quad{\rm and}\quad p^k_{+}:=\max\{p^k_{1},\dots,p^k_{n}\},\]
    for $k=1,2,\dots,m$.
From the definition of the mixed-norm Lebesgue space, we know that $L^{\Vec{p}}(\mathbb R^{n})$ with $\Vec{p}\in(0,\infty)^{n}$ is a ball quasi-Banach function space. By \cite{HLYY}, Assumption~\ref{ass2.7} holds true when $s\in(0,1]$, $\theta\in(0,\min\{s,p_{-}\})$ and $X:=L^{\Vec{p}}(\mathbb R^{n})$ with $\Vec{p}\in(0,\infty)^{n}$. Meanwhile, the 
Assumption~\ref{ass2.8} holds true when $X:=L^{\Vec{p}}(\mathbb R^{n})$ with $\Vec{p}\in(0,\infty)^{n}$, $s\in(0,p_{-})$ and $q\in(\max\{1,p_{+}\},\infty]$. Furthermore, $L^{\Vec{p}}(\mathbb R^{n})$ has an absolutely continuous quasi-norm. 
Moreover, by the definition of mixed-norm Lebesgue spaces and
the classical H\"{o}lder inequality on Lebesgue spaces,
a simple computation yields the following proposition. 

\begin{proposition}\label{mixholder}
Let $0<p_i, p_i^k<\infty$ such that
$$\frac{1}{p_i}=\sum_{k=1}^m\frac{1}{p_i^k},$$
where $i=1,\dots,n $ and $k=1,\dots, m.$
Then for all $f_k\in L^{\Vec{p_k}}(\mathbb R^{n})$ we have
$$\left\|\prod_{k=1}^mf_k\right\|_{L^{\Vec{p}}}\lesssim \prod_{k=1}^m\|f_k\|_{L^{\Vec{p_k}}},$$
where $\Vec{p_k}:=(p^k_{1}, \cdots,p^k_{n})$.
\end{proposition}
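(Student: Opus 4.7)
The plan is to establish the inequality by iterating the classical H\"older inequality in each variable successively, processing $x_1, x_2, \ldots, x_n$ in order. At each step $i$ the hypothesis $\frac{1}{p_i}=\sum_{k=1}^m\frac{1}{p_i^k}$ is exactly the admissibility condition for a standard one-dimensional H\"older inequality in the variable $x_i$, so the whole argument amounts to nothing more than an induction on $n$.

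Concretely, I would first fix $(x_2,\ldots,x_n)$ and view $\prod_{k=1}^m f_k(\cdot, x_2,\ldots,x_n)$ as a product of $m$ functions of $x_1\in\mathbb R$. Since $\frac{1}{p_1}=\sum_{k=1}^m\frac{1}{p_1^k}$, the classical H\"older inequality gives
\[
\left\|\prod_{k=1}^m f_k(\cdot, x_2,\ldots, x_n)\right\|_{L^{p_1}_{x_1}}\le \prod_{k=1}^m \|f_k(\cdot, x_2,\ldots, x_n)\|_{L^{p_1^k}_{x_1}}.
\]
Next I would take the $L^{p_2}_{x_2}$ norm of both sides. The right-hand side is a product of $m$ non-negative functions of $(x_2,\ldots,x_n)$, so another application of H\"older in $x_2$ together with $\frac{1}{p_2}=\sum_{k=1}^m \frac{1}{p_2^k}$ allows me to distribute the $L^{p_2}_{x_2}$ norm across the product with individual exponents $p_2^k$. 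Iterating this procedure through $x_3,\ldots,x_n$, I recover on the right-hand side precisely the iterated mixed norm $\prod_{k=1}^m\|f_k\|_{L^{\Vec{p_k}}}$, while the outermost norm on the left-hand side builds up to $\|\prod_{k=1}^m f_k\|_{L^{\Vec{p}}}$.

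The only bookkeeping concern is that from the second step onward the quantities being multiplied are not the $f_k$ themselves but partial mixed norms of them in the already-processed variables. This is harmless: by Tonelli's theorem each such partial norm is a non-negative measurable function of the remaining variables, so one-dimensional H\"older applies verbatim at every stage. Consequently I do not foresee a genuine obstacle; the argument is a completely routine induction on the number of variables, with the inductive step being exactly the one-dimensional H\"older inequality, and the standard modifications cover the case when some $p_i^k=\infty$.
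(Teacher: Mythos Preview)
Your proposal is correct and matches the paper's own treatment: the paper does not give a detailed proof but simply states that the proposition follows ``by the definition of mixed-norm Lebesgue spaces and the classical H\"older inequality on Lebesgue spaces, a simple computation,'' which is precisely the variable-by-variable iteration you describe.
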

Furthermore, we set $\Vec{q_k}:=\left(\frac{np_1^k}{n-\alpha p_1^k},\cdots,\frac{np_n^k}{n-\alpha p_n^k}\right)\in (0,\infty)^n$.
Then we can choose $q_0\le q_-^k$ and $p_0\le p_-^k$ such that $(L^{\Vec{q_k}})^{\frac{1}{q_0}}$ and $(L^{\Vec{p_k}})^{\frac{1}{p_0}}$ are ball Banach function spaces fulfilling
\begin{align*}
\left([L^{\Vec{q_k}}]^{\frac{1}{q_0}}\right)'= 
\left(\left([L^{\Vec{p_k}}]^{\frac{1}{p_0}}\right)'\right)^{\frac{p_0}{q_0}}.
\end{align*}
 
Therefore, Theorem~\ref{s1t1} holds true when $Y= L^{\Vec{q}}(\mathbb R^{n})$, 
$X_1={L^{\Vec{p_1}}}(\mathbb R^{n})$, $\dots,$
$X_m={L^{\Vec{p_m}}}(\mathbb R^{n})$.
Denote $H^{\Vec{p}}(\mathbb R^{n})$ the mixed-norm Hardy space as in Definition \ref{hx} with $X$ replaced by $L^{\Vec{p}}(\mathbb R^{n})$, which can be found in \cite{CJY,HLYY,HY}.

\begin{theorem}\label{th510} 
Given an integer \( m \geq 1 \)
and $0<\alpha<mn$.
Let $0<p_i, p_i^k,q_i,q_i^k<\infty$ such that
such that
$$\frac{1}{p_i}=\sum_{k=1}^m\frac{1}{p_i^k},\quad
\frac{1}{q_i}=\sum_{k=1}^m\frac{1}{q_i^k}$$
where $k=1,2,\ldots,m$. 
Suppose that $\Vec{q_k}:=\left(\frac{mnp_1^k}{mn-\alpha p_1^k},\cdots,\frac{mnp_n^k}{mn-\alpha p_n^k}\right)\in (0,\infty)^n$.
If 
$\mathcal T$ is an multilinear fractional integral operator, 
then $\mathcal T$ can be extended to a bounded operator from
$H^{\Vec{p_1}}\times \cdots \times H^{\Vec{p_m}}$ into $L^{\Vec{q}}$.
\end{theorem}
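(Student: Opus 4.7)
The plan is to reduce Theorem~\ref{th510} to Theorem~\ref{s1t1} by taking the ball quasi-Banach function spaces to be $X_k := L^{\Vec{p_k}}(\mathbb{R}^n)$, $Y_k := L^{\Vec{q_k}}(\mathbb{R}^n)$ and $Y := L^{\Vec{q}}(\mathbb{R}^n)$, and then verifying that the abstract hypotheses of Theorem~\ref{s1t1} are met in the mixed-norm setting. Most of the ingredients are already assembled in the paragraphs preceding the theorem statement; the strategy is to assemble them carefully and to make the matching between the scalar exponents appearing in Theorem~\ref{s1t1} and the vector exponents here explicit.

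First I would verify the structural assumptions on each space. As recalled in Subsection~\ref{s3s3}, $L^{\Vec{p_k}}$ is a ball quasi-Banach function space with absolutely continuous quasi-norm, and the results of \cite{HLYY} give Assumption~\ref{ass2.7} together with Assumption~\ref{ass2.8} for $L^{\Vec{p_k}}$ under the standard parameter restrictions $s\in(0,1]$ and $\theta\in(0,\min\{s,p_-^k\})$; the same results applied to $L^{\Vec{q}}$ furnish Assumption~\ref{ass2.8} for $Y$. The multilinear H\"older inequality~\eqref{holder} is precisely Proposition~\ref{mixholder}.

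Next I would make the scalar-exponent choices needed to invoke Theorem~\ref{s1t1}. For each $k$, pick $p_k^0\in(0,p_-^k)$ and define $q_k^0$ by $\frac{1}{p_k^0}-\frac{1}{q_k^0}=\frac{\alpha}{mn}$; the coordinatewise relation $\frac{1}{p_i^k}-\frac{1}{q_i^k}=\frac{\alpha}{mn}$ together with $p_k^0\le p_-^k$ forces $q_k^0\le q_-^k$. With these choices, $(L^{\Vec{p_k}})^{1/p_k^0}$ and $(L^{\Vec{q_k}})^{1/q_k^0}$ are ball Banach function spaces, and the associate-space identity~\eqref{XY} reduces to a direct computation of the dual of a mixed-norm Lebesgue space, as indicated at the end of Subsection~\ref{s3s3}. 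Moreover, since every coordinate of $\Vec{q_k}/q_k^0$ exceeds $1$, the associate space $(L^{\Vec{q_k}/q_k^0})'$ is again a mixed-norm Lebesgue space with all coordinates strictly larger than $1$, and the Hardy--Littlewood maximal operator is bounded on it by the classical Benedek--Panzone mixed-norm inequality.

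With every hypothesis of Theorem~\ref{s1t1} verified, that theorem yields the desired extension of $\mathcal T_\alpha$ as a bounded operator from $H^{\Vec{p_1}}\times\cdots\times H^{\Vec{p_m}}$ into $L^{\Vec{q}}$. The only real technical point, which I expect to be the main obstacle, is ensuring that the scalar/vector exponent matching is tight enough so that the identity~\eqref{XY} holds literally in the form required by Theorem~\ref{s1t1}; once $p_k^0,q_k^0$ are chosen below the smallest coordinate exponents in a compatible way, the rest amounts to bookkeeping built on results already cited in the section.
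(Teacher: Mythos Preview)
Your proposal is correct and matches the paper's own argument essentially line for line: the paper does not give a separate proof of Theorem~\ref{th510} but instead, in the paragraphs of Subsection~\ref{s3s3} preceding the statement, verifies exactly the hypotheses of Theorem~\ref{s1t1} for $X_k=L^{\Vec{p_k}}$, $Y_k=L^{\Vec{q_k}}$, $Y=L^{\Vec{q}}$ (ball quasi-Banach structure, absolutely continuous quasi-norm, Assumptions~\ref{ass2.7} and~\ref{ass2.8} via \cite{HLYY}, the H\"older inequality via Proposition~\ref{mixholder}, and the associate-space identity~\eqref{XY} after choosing $p_0\le p_-^k$, $q_0\le q_-^k$). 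Your write-up is in fact slightly more explicit than the paper about the compatibility condition $\frac{1}{p_k^0}-\frac{1}{q_k^0}=\frac{\alpha}{mn}$ and about why $\mathcal M$ is bounded on $(L^{\Vec{q_k}/q_k^0})'$, but the route is the same.
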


\subsection{Hardy--Lorentz spaces}\label{s3s4}
We recall some basic results on the Lorentz space in \cite{Lo, SHYY,WYY}.
\begin{definition}
    The \emph{Lorentz space} $L^{p,q}(\mathbb R^{n})$ is defined to be the set of all measurable functions $f$ on $\mathbb R^{n}$ such that, when $p,q\in(0,\infty)$,
    \[
    \|f\|_{L^{p,q}}:=\left\{  \int_{0}^{\infty}\left[ t^{\frac{1}{p}}f^{*}(t) \right]^{q}\frac{dt}{t} \right\}^{\frac{1}{q}}<\infty,
    \]
    and, when $p\in(0,\infty)$ and $q=\infty$,
    \[
    \|f\|_{L^{p,q}}:=\sup_{t\in(0,\infty)}t^{\frac{1}{p}}f^{*}(t)<\infty,
    \]
    where $f^{*}$ denotes the decreasing rearrangement of $f$, which is defined by setting, for any $t\in[0,\infty)$,
    \[
    f^{*}(t):=\inf\{s\in(0,\infty)\colon \mu_{f}(s)\leq t\}
    \]
    with $\mu_{f}(s):=\left\vert  \left\{  x\in\mathbb R^{n}\colon\vert f(x)\vert>s \right\} \right\vert$.
\end{definition}

It is known  in \cite{SHYY} that $L^{p,q}(\mathbb R^{n})$ is a ball Banach function space when $p,q\in(1,\infty)$ or $p\in(1,\infty)$ and $q=\infty$
and is a ball quasi-Banach function space when $p,q\in(0,\infty)$ or $p\in(0,\infty)$ and $q=\infty$. 
From \cite[Theorem 2.3(iii)]{CRS}, Assumption~\ref{ass2.7} holds true when  $s\in(0,1]$ and $\theta\in(0,\min\{s,p,q\})$. 
If $X:=L^{p,r}(\mathbb R^{n})$ with $p\in(0,\infty)$ and $r\in(0,\infty]$. 
By \cite[Theorem 1.4.16]{G} and \cite[Remark 2.7(d)]{WYY},
Assumption~\ref{ass2.8} holds true when $s\in(0,\min\{p,r\})$ and $q\in(\max\{1,p,r\},\infty]$. 

Moreover, we also need the following H\"{o}lder inequality on Lorentz spaces which can be found in \cite[Section 4.3.2]{HN}.

\begin{proposition}\label{mixlorentz}
Let $0<p, q, p_k,q_k<\infty$ such that
$$
\frac{1}{p}=\sum_{k=1}^m\frac{1}{p_k},\quad
\frac{1}{q}=\sum_{k=1}^m\frac{1}{q_k},
$$
where $k=1,\dots, m.$
Then for all $f_k\in L^{p,q}(\mathbb R^{n})$ we have
$$\left\|\prod_{k=1}^mf_k\right\|_{L^{p,q}}\lesssim \prod_{k=1}^m\|f_k\|_{L^{p_k,q_k}}.$$
\end{proposition}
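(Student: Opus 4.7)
The plan is to reduce the Lorentz-space H\"older inequality to the classical scalar H\"older inequality on $((0,\infty), dt/t)$ via a pointwise control of the decreasing rearrangement of a product. The key input is the iterated rearrangement bound
$$(f_1\cdots f_m)^{\ast}(t)\le \prod_{k=1}^{m} f_k^{\ast}(t/m),$$
which follows by induction on $m$ from the standard two-factor inequality $(fg)^{\ast}(s+t)\le f^{\ast}(s)g^{\ast}(t)$ specialized at $s=t$. I would first handle the case $p,q,p_k,q_k\in(0,\infty)$, leaving the endpoint $q=\infty$ or isolated $q_k=\infty$ to an analogous argument based on the sup form of the Lorentz quasi-norm.

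Assuming all exponents finite, substitute this rearrangement bound into the definition of $\|f_1\cdots f_m\|_{L^{p,q}}^{q}$ and change variables $t\mapsto mt$ to absorb the factor $1/m$. Using the hypothesis $1/p=\sum_k 1/p_k$ to factor $t^{1/p}=\prod_k t^{1/p_k}$, the integrand becomes
$$\prod_{k=1}^{m}\bigl[t^{1/p_k} f_k^{\ast}(t)\bigr]^{q}.$$
Now apply the classical H\"older inequality on the measure space $((0,\infty), dt/t)$ with the exponents $r_k := q_k/q$; these are admissible because $\sum_k 1/r_k = q\sum_k 1/q_k = 1$, which is precisely the second scaling hypothesis. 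Each resulting factor computes to
$$\left(\int_0^\infty [t^{1/p_k} f_k^{\ast}(t)]^{q_k}\,\tfrac{dt}{t}\right)^{q/q_k} = \|f_k\|_{L^{p_k,q_k}}^{q},$$
and taking $q$-th roots yields the claim with an implicit constant depending only on $m$, $p$, and $q$.

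The main obstacle is essentially bookkeeping rather than analysis: both scaling identities appear in essential ways, the first in the factorization of $t^{1/p}$ and the second in the admissibility of the H\"older exponents $q_k/q$. The only genuinely non-trivial ingredient is the $m$-fold rearrangement inequality, which is classical (see e.g.\ Grafakos' \emph{Classical Fourier Analysis} or the cited \cite{HN}). Accordingly the proof is short enough that the excerpt just quotes the reference; if one wanted a self-contained argument here, the induction on $m$ together with the three displayed steps above suffices.
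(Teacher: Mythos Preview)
Your argument is correct and is the standard proof of the Lorentz--space H\"older inequality: the rearrangement bound $(f_1\cdots f_m)^{\ast}(t)\le\prod_k f_k^{\ast}(t/m)$ followed by the scalar H\"older inequality on $((0,\infty),dt/t)$ with exponents $q_k/q$. The paper does not supply its own proof of this proposition; it simply quotes \cite[Section~4.3.2]{HN}, whose argument is exactly the one you outline, so there is nothing to compare.
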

 
Moreover, we can choose that $q_0<\min\{\tilde p, q\}$ and $p_0<\min\{p,q\}$ such that
$
\frac{1}{q_0}=\frac{1}{p_0}-\frac{\alpha}{n}$ and $(L^{\tilde p, q})^{\frac{1}{q_0}}$ and $(L^{p,q})^{\frac{1}{p_0}}$ are ball Banach function spaces fulfilling
\begin{align*}
\left([L^{\tilde p,q}]^{\frac{1}{q_0}}\right)'= 
\left(\left([L^{p,q}]^{\frac{1}{p_0}}\right)'\right)^{\frac{p_0}{q_0}}.
\end{align*}
 
Therefore, Theorem~\ref{s1t1} holds true when $X= L^{p,q}(\mathbb R^{n})$, 
$X_1={L^{p_1,q_1}}(\mathbb R^{n})$, $\dots,$
$X_m={L^{p_m,q_m}}(\mathbb R^{n})$.
Denote $H^{p,q}(\mathbb R^{n})$ the Hardy--Lorentz space as in Definition \ref{hx} with $X$ replaced by $L^{p,q}(\mathbb R^{n})$. See \cite{SHYY,WYY}
for more details on the Hardy--Lorentz space.

\begin{theorem}\label{th513} 
Given an integer \( m \geq 1 \)
and $0<\alpha<mn$.
Let $p,q, p_k, q_k$ be the same as in Proposition \ref{mixlorentz},
where $k=1,2,\ldots,m$. 
Suppose that $\tilde p, \tilde p_k,$ satisfy that
$$
\frac{1}{\tilde p}=\sum_{k=1}^m\frac{1}{\tilde p_k},\quad
\frac{1}{\tilde p}=\frac{1}{p_k}-\frac{\alpha}{mn}.
$$
If 
$\mathcal T$ is an multilinear fractional integral operator, 
then $\mathcal T$ can be extended to a bounded operator from
$H^{p_1,q_1}\times \cdots \times H^{p_m,q_m}$ into $L^{\tilde p, q}$.
\end{theorem}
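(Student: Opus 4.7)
The plan is to deduce Theorem \ref{th513} as a direct application of the main result, Theorem \ref{s1t1}, with the identifications $X_k := L^{p_k,q_k}(\mathbb R^n)$, $Y_k := L^{\tilde p_k,q_k}(\mathbb R^n)$, and $Y := L^{\tilde p,q}(\mathbb R^n)$. Under this matching, the $q_k$ appearing in Theorem \ref{s1t1} plays the role of $\tilde p_k$ here, and the imposed identity $\frac{1}{\tilde p_k}=\frac{1}{p_k}-\frac{\alpha}{mn}$ on the Lorentz first indices is exactly the scaling relation $\frac{1}{p_k}-\frac{1}{q_k}=\frac{\alpha}{mn}$ required there. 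Essentially all structural hypotheses have already been collected in the body of this subsection, so the proof amounts to assembling these facts in the correct order.

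First I would recall that each $L^{p_k,q_k}(\mathbb R^n)$ (with $p_k,q_k\in(0,\infty)$) is a ball quasi-Banach function space by \cite{SHYY}, and its quasi-norm is absolutely continuous because $q_k<\infty$. Assumption \ref{ass2.7} for $X_k$ is supplied by \cite[Theorem 2.3(iii)]{CRS} with $s\in(0,1]$ and $\theta\in(0,\min\{s,p_k,q_k\})$; Assumption \ref{ass2.8} for $X_k$, and analogously for $Y$, follows from \cite[Theorem 1.4.16]{G} together with \cite[Remark 2.7(d)]{WYY} after choosing $s\in(0,\min\{p_k,q_k\})$ and the exponent $q$ from Assumption \ref{ass2.8} sufficiently large. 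The multilinear H\"older inequality \eqref{holder} is precisely Proposition \ref{mixlorentz}. It remains to produce auxiliary exponents $p_k^0, q_k^0$ so that $X_k^{1/p_k^0}$ and $Y_k^{1/q_k^0}$ are ball Banach function spaces, \eqref{XY} holds, and $\mathcal M$ is bounded on $(Y_k^{1/q_k^0})'$. Picking $p_k^0\in(0,\min\{1,p_k,q_k\})$ and setting $q_k^0$ by $\frac{1}{p_k^0}-\frac{1}{q_k^0}=\frac{\alpha}{mn}$ guarantees that both powered spaces are ball Banach function spaces; the classical Lorentz associate-space formula $(L^{r,s})'=L^{r',s'}$ (valid for $r,s>1$) then identifies both sides of \eqref{XY} with the same Lorentz space, and the maximal-function bound on $(Y_k^{1/q_k^0})'$ follows immediately since this associate space is itself a Lorentz space with both indices strictly greater than $1$.

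The main obstacle I foresee is the bookkeeping of auxiliary exponents in verifying \eqref{XY}: one must simultaneously ensure that $p_k/p_k^0$ and $\tilde p_k/q_k^0$, together with their associate exponents and associate $\ell$-indices, all fall in the range in which the classical Lorentz associate-space formula applies, while preserving the scaling $\frac{1}{p_k^0}-\frac{1}{q_k^0}=\frac{\alpha}{mn}$ dictated by Theorem \ref{s1t1}. Once these parameters are pinned down, \eqref{XY} reduces to a direct computation with Lorentz indices. With every hypothesis of Theorem \ref{s1t1} thereby verified, its conclusion immediately yields the boundedness of $\mathcal T_\alpha$ from $H^{p_1,q_1}(\mathbb R^n)\times\cdots\times H^{p_m,q_m}(\mathbb R^n)$ into $L^{\tilde p,q}(\mathbb R^n)$, which is Theorem \ref{th513}.
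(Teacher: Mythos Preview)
Your proposal is correct and follows essentially the same route as the paper: both treat Theorem~\ref{th513} as a direct instance of Theorem~\ref{s1t1} by taking $X_k=L^{p_k,q_k}$, $Y_k=L^{\tilde p_k,q_k}$, $Y=L^{\tilde p,q}$, invoking \cite{SHYY}, \cite[Theorem~2.3(iii)]{CRS} and \cite[Remark~2.7(d)]{WYY} for Assumptions~\ref{ass2.7}--\ref{ass2.8}, Proposition~\ref{mixlorentz} for \eqref{holder}, and the Lorentz associate-space formula to check \eqref{XY} after selecting suitable auxiliary exponents $p_k^0,q_k^0$ with $\tfrac{1}{p_k^0}-\tfrac{1}{q_k^0}=\tfrac{\alpha}{mn}$.
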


\subsection{Hardy--Orlicz spaces}\label{s3s5}
A function $\Phi\colon[0,\infty)\to[0,\infty)$ is called an \emph{Orlicz function} if it is non-decreasing and satisfies $\Phi(0)=0$, $\Phi(t)>0$ whenever $t\in(0,\infty)$, and $\lim_{t\to\infty}\Phi(t)=\infty$.
Let $p\in[0,\infty)$ and $\varphi\colon\mathbb R^{n}\times[0,\infty)\to[0,\infty)$ be a function such that, for almost every $x\in\mathbb R^{n}$, $\varphi(x,\cdot)$ is an Orlicz function. The function $\varphi$ is said to be of \emph{uniformly upper} (resp., \emph{lower}) \emph{type} $p\in[0,\infty)$ if there exists a positive constant $C$ such that, for any $x\in\mathbb R^{n}$, $t\in[0,\infty)$ and $s\in[1,\infty)$ (resp.,$s\in[0,1]$), $\varphi(x,st)\leq Cs^{p}\varphi(x,t)$.
The function $\varphi$ is said to satisfy the \emph{uniform Muckenhoupt condition} for some $q\in[1,\infty)$, denoted by $\varphi\in \mathbb A_{q}(\mathbb R^{n})$, if, when $q\in(1,\infty)$,
\[
[\varphi]_{\mathbb A_{q}}:=\sup_{t\in(0,\infty)}\sup_{B\subset\mathbb R^{n}}\frac{1}{\vert B\vert^{q}}\int_{B}\varphi(x,t)dx\left\{ \int_{B}[\varphi(y,t)]^{-\frac{q^{\prime}}{q}}dy \right\}^{\frac{q}{q^{\prime}}}<\infty,
\]
where $1/q+1/q^{\prime}=1$, or
\[
[\varphi]_{\mathbb A_{1}}:=\sup_{t\in(0,\infty)}\sup_{B\subset\mathbb R^{n}}\frac{1}{\vert B\vert}\int_{B}\varphi(x,t)dx\left( \operatorname*{ess\,sup}\limits_{y\in\mathbb B}[\varphi(y,t)]^{-1} \right)<\infty.
\]
The class $\mathbb A_{\infty}(\mathbb R^{n})$ is defined by setting
\[
\mathbb A_{\infty}(\mathbb R^{n}):=\bigcup_{q\in[1,\infty)}\mathbb A_{q}(\mathbb R^{n}).
\]
For any given $\varphi\in\mathbb A_{\infty}(\mathbb R^{n})$, the \emph{critical weight index} $q(\varphi)$ is defined by setting
\[
q(\varphi):=\inf\{q\in[1,\infty)\colon\varphi\in\mathbb A_{q}(\mathbb R^{n})\}.
\]
Then the function $\varphi\colon\mathbb R^{n}\times[0,\infty)\to[0,\infty)$ is called a \emph{growth function} if the following hold true:
\begin{enumerate}
    \item $\varphi$ is a \emph{Musielak--Orlicz function}, namely,\\
    $\varphi(x,\cdot)$ is an Orlicz function for almost every given $x\in\mathbb R^{n}$;\\
    $\varphi(\cdot,t)$ is a measurable function for any given $t\in[0,\infty)$.
    \item $\varphi\in\mathbb A_{\infty}(\mathbb R^{n})$.
    \item The function $\varphi$ is of uniformly lower type $p$ for some $p\in(0,1]$ and of uniformly upper type 1.
\end{enumerate}
\begin{definition}
    For a growth function $\varphi$, a measurable function $f$ on $\mathbb R^{n}$ is said to be in the \emph{Musielak--Orlicz space} $L^{\varphi}(\mathbb R^{n})$ if $\int_{\mathbb R^{n}}\varphi(x,\vert f(x)\vert)dx<\infty$, equipped with the (quasi-)norm
    \[
    \|f\|_{L^{\varphi}}:=\inf\left\{  \lambda\in(0,\infty)\colon\int_{\mathbb R^{n}}\varphi\left( x,\frac{\vert f(x)\vert}{\lambda} \right)dx\leq1\right\}.
    \]
\end{definition}
    
It is shown in \cite[Subsection 7.7]{SHYY} that $L^{\varphi}(\mathbb R^{n})$ is a ball quasi-Banach function space.    
Let $X:=L^{\varphi}(\mathbb R^{n})$ with uniformly lower type $p_{\varphi}^{-}$ and uniformly upper type $p_{\varphi}^{+}$. From \cite[Theorem 7.14(i)]{SHYY}, Assumption~\ref{ass2.7} holds true when $s\in(0,1]$ and $\theta\in(0,\min\{s,p^{-}_{\varphi}/q(\varphi)\})$. Besides,
by \cite[Theorem 7.12]{SHYY} and \cite[Remark 2.7(g)]{WYY},
Assumption~\ref{ass2.8} holds true when $s\in(0,p_\varphi^-)$ and $q\in(1,\infty]$. 
Moreover, we also recall the following H\"{o}lder inequality on Orlicz spaces in \cite[Theorem 2.12]{Wa}.

\begin{proposition}\label{mixorlicz}
Let $L^{\varphi_k}(\mathbb R^{n})$ with uniformly lower type $p_{\varphi_k}^{-}$ and uniformly upper type $p_{\varphi_k}^{+}$,
where $k=1,\dots, m$. Define $\varphi=(\prod_{k=1}^m\varphi_k^{-1})^{-1}$. 
Then for all $f_k\in L^{\varphi_k}(\mathbb R^{n})$, we have
$$\left\|\prod_{k=1}^mf_k\right\|_{L^{\varphi}}\lesssim \prod_{k=1}^m\|f_k\|_{L^{\varphi_k}}.$$
\end{proposition}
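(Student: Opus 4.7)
The plan is to reduce Proposition~\ref{mixorlicz} to a pointwise Young-type inequality and then integrate, using the uniform upper-type property of $\varphi$ to pass from modular to norm bounds.

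The first step is to establish the pointwise Young-type inequality: for any $s_1,\ldots,s_m\ge 0$ and almost every $x\in\mathbb R^n$,
\[
\varphi\Big(x,\prod_{k=1}^m s_k\Big)\le\sum_{j=1}^m\varphi_j(x,s_j).
\]
Indeed, setting $T:=\sum_{j=1}^m\varphi_j(x,s_j)$ and using the identity $s_k=\varphi_k^{-1}(x,\varphi_k(x,s_k))$ together with the monotonicity of each $\varphi_k^{-1}(x,\cdot)$, one gets
\[
\prod_{k=1}^m s_k=\prod_{k=1}^m\varphi_k^{-1}\big(x,\varphi_k(x,s_k)\big)\le\prod_{k=1}^m\varphi_k^{-1}(x,T)=\varphi^{-1}(x,T),
\]
where the last equality is precisely the defining relation $\varphi^{-1}=\prod_k\varphi_k^{-1}$. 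Applying $\varphi(x,\cdot)$ to both sides gives the claim.

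Next, by homogeneous rescaling of each $f_k$ (using the Luxemburg definition of the quasi-norm and the uniform upper type of $\varphi_k$) I would normalize to the case $\|f_k\|_{L^{\varphi_k}}\le 1$ for every $k$, so that $\int_{\mathbb R^n}\varphi_k(x,|f_k(x)|)\,dx\le 1$. Substituting $s_k=|f_k(x)|$ into the pointwise inequality and integrating yields
\[
\int_{\mathbb R^n}\varphi\Big(x,\prod_{k=1}^m|f_k(x)|\Big)\,dx\le\sum_{j=1}^m\int_{\mathbb R^n}\varphi_j(x,|f_j(x)|)\,dx\le m.
\]
Since $\varphi$ is of uniform upper type $1$, a standard rescaling $\varphi(x,t/\lambda)\le C\lambda^{-1}\varphi(x,t)$ for $\lambda\ge 1$ converts this modular bound into the Luxemburg norm bound $\|\prod_{k=1}^m f_k\|_{L^{\varphi}}\lesssim 1$, and undoing the normalization yields the desired inequality $\|\prod_{k=1}^m f_k\|_{L^{\varphi}}\lesssim\prod_{k=1}^m\|f_k\|_{L^{\varphi_k}}$.

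The main obstacle is verifying that $\varphi=(\prod_{k=1}^m\varphi_k^{-1})^{-1}$ is itself a genuine growth function, i.e., that it inherits the uniform lower/upper type bounds and the Muckenhoupt condition $\mathbb A_\infty(\mathbb R^n)$ from the individual factors $\varphi_k$. This requires a careful computation from the relation $\varphi^{-1}=\prod_k\varphi_k^{-1}$: the type indices combine like reciprocals, so one must check that the resulting ranges still satisfy $\varphi$ being of uniform lower type some $p\in(0,1]$ and uniform upper type $1$, and that the product of the weights $\varphi_k(\cdot,t)$ remains in some $\mathbb A_q$. Once this is secured, the chain above works verbatim; otherwise both the Luxemburg quasi-norm $\|\cdot\|_{L^{\varphi}}$ and the final modular-to-norm conversion would not be available.
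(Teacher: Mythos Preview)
The paper does not supply its own proof of this proposition; it simply cites \cite[Theorem 2.12]{Wa} and states the inequality. Your argument via the pointwise Young-type inequality
\[
\varphi\Big(x,\prod_{k=1}^m s_k\Big)\le\sum_{j=1}^m\varphi_j(x,s_j),
\]
obtained from the monotonicity of $\varphi_k^{-1}(x,\cdot)$ and the defining relation $\varphi^{-1}=\prod_k\varphi_k^{-1}$, is exactly the standard route to H\"older's inequality in (Musielak--)Orlicz spaces, and is almost certainly what the cited reference does as well.

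One correction: in the last step you invoke ``uniform upper type $1$'' to get $\varphi(x,t/\lambda)\le C\lambda^{-1}\varphi(x,t)$ for $\lambda\ge 1$. That inequality is the \emph{lower} type condition, not the upper type. Uniform upper type $1$ says $\varphi(x,st)\le Cs\,\varphi(x,t)$ for $s\ge 1$, which goes the other way. What you actually need (and what growth functions have) is uniform lower type $p$ for some $p\in(0,1]$: then $\varphi(x,t/\lambda)\le C\lambda^{-p}\varphi(x,t)$ for $\lambda\ge 1$, and choosing $\lambda\sim (Cm)^{1/p}$ turns the modular bound $\int\varphi(x,\prod_k|f_k|)\,dx\le m$ into $\|\prod_k f_k\|_{L^\varphi}\lesssim 1$. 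With this fix the chain is complete. Your identification of the residual issue---checking that $\varphi=(\prod_k\varphi_k^{-1})^{-1}$ inherits the growth-function axioms (type bounds and $\mathbb A_\infty$)---is accurate; this is handled in the cited source and is the only place where real work beyond the one-line Young inequality is needed.
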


Combining the above results with \cite[Lemma 2.10]{CH},
we deduce that Theorem~\ref{s1t1} also holds true when $Y= L^{\Psi}(\mathbb R^{n})$, 
$X_1={L^{\varphi_1}}(\mathbb R^{n})$, $\dots,$
$X_m={L^{\varphi_m}}(\mathbb R^{n})$, where $\Psi_k^{-1}:=t^{-\frac{\alpha}{mn}}\varphi_k^{-1}$ and $\Psi=(\prod_{k=1}^m\Psi_k^{-1})^{-1}$.
Denote $H^{\varphi}(\mathbb R^{n})$ the Hardy--Orlicz space as in Definition \ref{hx} with $X$ replaced by $L^{\varphi}(\mathbb R^{n})$. 
For more information on the Hardy--Orlicz space, see \cite{CFYY,LFFY,SHYY,WYY,YLK}.

\begin{theorem}\label{th516} 
Let $L^{\varphi}(\mathbb R^{n}),  L^{\varphi_k}(\mathbb R^{n})$ be the same as in Proposition \ref{mixorlicz},
where $k=1,2,\ldots,m$. Let $\Psi_k^{-1}:=t^{-\frac{\alpha}{mn}}\varphi_k^{-1}$ and $\Psi=(\prod_{k=1}^m\Psi_k^{-1})^{-1}$.
If 
$\mathcal T$ is an multilinear fractional integral operator, 
then $\mathcal T$ can be extended to a bounded operator from
$H^{\varphi_1}\times \cdots \times H^{\varphi_m}$ into $L^{\Psi}$.
\end{theorem}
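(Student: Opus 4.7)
The plan is to deduce Theorem~\ref{th516} directly from the master Theorem~\ref{s1t1}, with the identifications $X_k := L^{\varphi_k}(\mathbb R^n)$, $Y_k := L^{\Psi_k}(\mathbb R^n)$, and $Y := L^{\Psi}(\mathbb R^n)$. The subsection preceding the statement has already assembled essentially all structural ingredients, so the proof reduces to a careful, clause-by-clause verification that the hypotheses of Theorem~\ref{s1t1} hold in this Orlicz setting.

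First, I would record that by \cite[Subsection 7.7]{SHYY}, \cite[Theorem 7.14(i)]{SHYY}, and \cite[Remark 2.7(g)]{WYY}, each $L^{\varphi_k}(\mathbb R^n)$ is a ball quasi-Banach function space with absolutely continuous quasi-norm satisfying Assumptions~\ref{ass2.7} and \ref{ass2.8}. Since $\Psi_k^{-1} = t^{-\alpha/(mn)}\varphi_k^{-1}$, the function $\Psi_k$ inherits uniformly lower and upper types from $\varphi_k$ (shifted by the factor $t^{-\alpha/(mn)}$), and $\Psi = (\prod_{k=1}^m \Psi_k^{-1})^{-1}$ inherits these as well; hence the same references yield that $L^{\Psi}(\mathbb R^n)$ satisfies Assumption~\ref{ass2.8}. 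The H\"older-type bound~\eqref{holder} is exactly Proposition~\ref{mixorlicz}, applied to $\Psi_1,\dots,\Psi_m,\Psi$ in place of $\varphi_1,\dots,\varphi_m,\varphi$.

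Next, to verify the duality identity~\eqref{XY}, I would pick $p_k \in (0, p_{\varphi_k}^-)$ with $\frac{1}{p_k} > \frac{\alpha}{mn}$, and then define $q_k$ by the relation $\frac{1}{p_k} - \frac{1}{q_k} = \frac{\alpha}{mn}$; by adjusting $p_k$ downward one also arranges $\frac{1}{q} = \sum_{k=1}^m \frac{1}{q_k}$. With these choices $(L^{\varphi_k})^{1/p_k}$ and $(L^{\Psi_k})^{1/q_k}$ are ball Banach function spaces (their lower types exceed $1$). Using $\Psi_k^{-1} = t^{-\alpha/(mn)}\varphi_k^{-1}$ together with the standard description of associate spaces of Orlicz spaces, invoking \cite[Lemma 2.10]{CH} produces the identity
\[
\left((L^{\Psi_k})^{1/q_k}\right)' = \left(\left((L^{\varphi_k})^{1/p_k}\right)'\right)^{p_k/q_k},
\]
which is precisely~\eqref{XY}. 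The boundedness of the Hardy--Littlewood maximal operator on $((L^{\Psi_k})^{1/q_k})'$ is then classical, since this associate space is a Muckenhoupt-weighted Orlicz space of appropriate type.

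The main technical obstacle is the Orlicz exponent bookkeeping: confirming that $p_k$ can be chosen small enough to satisfy both $p_k < p_{\varphi_k}^-$ and $\frac{1}{p_k} > \frac{\alpha}{mn}$, so that the corresponding $q_k$ is admissible for Assumption~\ref{ass2.8} applied to $L^{\Psi_k}$, and that the type-shift under $t \mapsto t^{-\alpha/(mn)}\varphi_k^{-1}(t)$ indeed falls in the regime where \cite[Lemma 2.10]{CH} is applicable. Once these parameter constraints are consistently met, all hypotheses of Theorem~\ref{s1t1} are in force, and the theorem yields the claimed bounded extension of $\mathcal T_\alpha$ from $H^{\varphi_1}\times\cdots\times H^{\varphi_m}$ into $L^{\Psi}$.
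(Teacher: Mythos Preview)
Your proposal is correct and follows essentially the same route as the paper: in Section~3.5 the paper does not give a separate proof of Theorem~\ref{th516} but simply verifies that the hypotheses of Theorem~\ref{s1t1} hold with $X_k=L^{\varphi_k}$, $Y_k=L^{\Psi_k}$, $Y=L^{\Psi}$, citing \cite[Subsection~7.7, Theorems~7.12 and~7.14(i)]{SHYY}, \cite[Remarks~2.4(g) and~2.7(g)]{WYY}, Proposition~\ref{mixorlicz}, and \cite[Lemma~2.10]{CH} for the duality relation~\eqref{XY}. Your clause-by-clause verification, including the explicit discussion of the parameter constraints on $p_k,q_k$ and the type-shift under $\Psi_k^{-1}=t^{-\alpha/(mn)}\varphi_k^{-1}$, is a more detailed rendering of exactly the same argument.
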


\section{Proof of Theorem \ref{s1t1}}

In this section, we will prove the boundedness of
multilinear fractional integral operators on product of Hardy spaces
associated with ball quasi-Banach function spaces.
We now give the proof of our main result.\\

\noindent\textit{\bf Proof of Theorem \ref{s1t1}}\quad
Let $1\le k\le m$ and fix arbitrary functions $f_k\in H_{fin}^{X_k,\infty,N}(\mathbb R^{n})$.
By Theorem \ref{finite}, $f_k$ admits an atomic decomposition:
\begin{equation*}
  f_k=\sum_{j_k=1}^{M}\lambda_{k,j_k}a_{k,j_k},
\end{equation*}
where  $\lambda_{k,j_k}\ge 0$ and 
$a_{k,j_k}$ are $(X_k, \infty, N)$-atoms that fulfill
$$
{\rm supp}(a_{k,j_k})\subset Q_{k,j_k},\quad
\|a_{k,j_k}\|_{L^{\infty}}\leq\frac{1}{\|\chi_{Q_{k,j_k}}\|_{X_k}},\quad
\int_{\mathbb R^{n}}a_{k,j_k}(x)x^{\alpha}dx=0
$$ 
for any multi-index $\vert\alpha\vert\leq N$,
and 
\begin{equation*}
  \left\| \sum_{j_k=1}^{M} \frac{\lambda_{k,j_k}\chi_{Q_{k,j_k}}}{\|\chi_{Q_{k,j_k}}\|_{X_k}}   \right\|_{X_k}
  \lesssim \|f_k\|_{{H}_{X_k}}.
\end{equation*}

First we show that
\begin{align*}
\|\mathcal T_\alpha(f_1,\dots,f_m)\|_{Y}\lesssim \prod_{k=1}^m\left\| \sum_{j_k=1}^{M} \frac{\lambda_{k,j_k}\chi_{Q_{k,j_k}}}{\|\chi_{Q_{k,j_k}}\|_{X_k}}   \right\|_{X_k}.
\end{align*}

By using the multilinearity property of $\mathcal T_\alpha$, for a. e. $x\in\mathbb R^n$ we write
\begin{align*}
\mathcal T_\alpha(f_1,\dots,f_m)(x)=\sum_{j_1}\cdots\sum_{j_m}\lambda_{1,j_1}\cdots
\lambda_{m,j_m}\mathcal T_\alpha(a_{1,j_1},\dots,a_{m,j_m})(x).
\end{align*}

Fixed $j_1,\cdots,j_m$, we consider the two  cases for $x\in\mathbb R^n$
as follows.\\
Case 1: $x\in Q^\ast_{1,j_1}\cap\cdots\cap Q^\ast_{m,j_m}$;\\
Case 2: $x\in Q^{\ast,c}_{1,j_1}\cup\cdots\cup Q^{\ast,c}_{m,j_m}$.

Then we have
\begin{align*}\label{}
&|\mathcal T_\alpha(f_1,\dots,f_m)(x)|\\
&\le\sum_{j_1}\cdots\sum_{j_m}|\lambda_{1,j_1}|
\cdots|\lambda_{m,j_m}||\mathcal T_\alpha(a_{1,j_1},\ldots,a_{m,j_m})(x)|
\chi_{Q^\ast_{1,j_1}\cap\cdots\cap Q^\ast_{m,j_m}}(x)\\
&\quad+\sum_{j_1}\cdots\sum_{j_m}|\lambda_{1,j_1}|
\cdots|\lambda_{m,j_m}||\mathcal T_\alpha(a_{1,j_1},\ldots,a_{m,j_m})(x)|
\chi_{Q^{\ast,c}_{1,j_1}\cup\cdots\cup Q^{\ast,c}_{m,j_m}}(x)\\
&=:I_1(x)+I_2(x).
\end{align*}

Now we estimate the term $I_1(x)$.
To end it, we need to prove that
\begin{align*}
\begin{split}
\|I_1\|_{Y}
\lesssim \prod_{k=1}^m\left\| \sum_{j_k=1}^{M} \frac{\lambda_{k,j_k}\chi_{Q_{k,j_k}}}{\|\chi_{Q_{k,j_k}}\|_{X_k}}   \right\|_{X_k}.
\end{split}
\end{align*}
For fixed $j_1,\ldots,j_m$,
assume that $Q^\ast_{1,j_1}\cap\cdots\cap Q^\ast_{m,j_m}\neq 0$, otherwise
there is nothing to prove. Without loss of generality,
suppose that $Q_{1,j_1}$ has the smallest size among all these cubes.
We can pick a cube $G_{j_1,\cdots,j_m}$ such that
$$Q^\ast_{1,j_1}\cap\cdots\cap Q^\ast_{m,j_m}\subset
G_{j_1,\ldots,j_m}\subset G^\ast_{j_1,\ldots,j_m}\subset
Q^{\ast\ast}_{1,j_1}\cap\cdots\cap Q^{\ast\ast}_{m,j_m}$$
and $|G_{j_1,\ldots,j_m}|\ge C|Q_{1,j_1}|$.

We denote that $H(x):=|\mathcal T_\alpha(a_{1,j_1},\ldots,a_{m,j_m})(x)|
\chi_{Q_{1,j_1}^\ast\cap\cdots\cap Q_{m,j_m}^\ast}(x)$.
Hence , $$\mbox{supp}\,H(x)\subset
{Q_{1,j_1}^\ast\cap\cdots\cap Q_{m,j_m}^\ast}\subset
G_{j_1,\cdots,j_m}.$$

Now we can estimate $H$ as follows:
By using the fact that
$\mathcal T_\alpha$ maps $L^{r_1}\times L^\infty\times\cdots
\times L^\infty$ into $L^{s}$ when $r_1>1$ and
$\frac{1}{s}=\frac{1}{r_1}-\frac{\alpha}{n}$, we
get that
\begin{align*}
\begin{split}
\|H\|_{L^{s}}&\le
\|\mathcal T_\alpha(a_{1,j_1},\ldots,a_{m,j_m})\|_{L^{s}}\\
&\le
C\|a_{1,j_1}\|_{L^{r_1}}\|a_{2,j_2}\|_{L^\infty}
\cdots\|a_{m,j_m}\|_{L^\infty}\\
&\le C|Q_{1,j_1}|^{\frac{1}{r_1}}\|\chi_{Q_{1,j_1}}\|^{-1}_{X_1}
\|a_{2,j_2}\|_{L^\infty}\|a_{m,j_m}\|_{L^\infty}\\
&\le C
|Q_{1,j_1}|^{\frac{1}{r_1}}\prod_{k=1}^m\|\chi_{Q_{k,j_k}}\|^{-1}_{X_k}\\
&\le C
|G_{j_1,\ldots,j_m}|^{\frac{1}{s}}
|G_{j_1,\ldots,j_m}|^{\frac{\alpha}{n}}\prod_{k=1}^m\|\chi_{Q_{k,j_k}}\|^{-1}_{X_k}.
\end{split}
\end{align*}

Then it follows that
\begin{align*}
\begin{split}
\left(\frac{1}{|G_{j_1,\ldots,j_m}|}\int_{G_{j_1,\ldots,j_m}}H(x)^sdx\right)^{\frac{1}{s}}
\lesssim 
|G_{j_1,\ldots,j_m}|^{\frac{\alpha}{n}}\prod_{k=1}^m\|\chi_{Q_{k,j_k}}\|_{X_k}.
\end{split}
\end{align*}

By Lemma \ref{l2.7},
we conclude that
\begin{align*}
\begin{split}
\|I_1\|_{Y}&=\left\|\sum_{j_1}\cdots\sum_{j_m}|\lambda_{1,j_1}|
\cdots|\lambda_{m,j_m}||\mathcal T_\alpha(a_{1,j_1},\ldots,a_{m,j_m})|
\chi_{Q^\ast_{1,j_1}\cap\cdots\cap Q^\ast_{m,j_m}}\right\|_{Y}\\
&\lesssim
\left\|\sum_{j_1}\cdots\sum_{j_m}|\lambda_{1,j_1}|
\cdots|\lambda_{m,j_m}|\left(\frac{1}{|G_{j_1,\ldots,j_m}|}\int_{G_{j_1,\ldots,j_m}}H(y)^sdy\right)^{\frac{1}{s}}
\chi_{G_{j_1,\ldots,j_m}}\right\|_{Y}\\
&\lesssim
\left\|\sum_{j_1}\cdots\sum_{j_m}|\lambda_{1,j_1}|
\cdots|\lambda_{m,j_m}|
|G_{j_1,\ldots,j_m}|^{\frac{\alpha}{n}}\prod_{k=1}^m\|\chi_{Q_{k,j_k}}\|^{-1}_{X_k}
\chi_{G_{j_1,\ldots,j_m}}\right\|_{Y}\\
&\lesssim\left\|\prod_{k=1}^m
\sum_{j_k}
|\lambda_{k,j_k}|\|\chi_{Q_{k,j_k}}\|^{-1}_{X_k}
|Q_{k,j_k}|^{\frac{\alpha}{mn}}
\chi_{Q^{\ast\ast}_{k,j_k}}
\right\|_{Y}.
\end{split}
\end{align*}

Observe that $\frac{1}{q_k}=\frac{1}{p_k}-\frac{\alpha}{mn}$, where $k=1,\ldots,m$.
By applying (\ref{holder}) and Lemma \ref{l2.8},  we obtain that
\begin{align*}
\begin{split}
&\left\|\prod_{k=1}^m \sum_{j_k=1}^M |\lambda_{k,j_k}| |Q_{k,j_k}|^{\frac{\alpha}{mn}}
\chi_{Q^{\ast\ast}_{k,j_k}}
\right\|_{Y}
\lesssim \prod_{k=1}^m \left\|\sum_{j_k=1}^M |\lambda_{k,j_k}|\|\chi_{Q_{k,j_k}}\|^{-1}_{X_k} |Q_{k,j_k}|^{\frac{\alpha}{mn}}
\chi_{Q^{\ast\ast}_{k,j_k}}
\right\|_{Y_k}\\
&\lesssim \prod_{k=1}^m \left\|\sum_{j_k=1}^M |\lambda_{k,j_k}|\|\chi_{Q_{k,j_k}}\|^{-1}_{X_k}
\chi_{Q^\ast_{k,j_k}}
\right\|_{X_k}
\lesssim \prod_{k=1}^m\left\| \sum_{j_k=1}^{M} \frac{\lambda_{k,j_k}\chi_{Q_{k,j_k}}}{\|\chi_{Q_{k,j_k}}\|_{X_k}}   \right\|_{X_k}.
\end{split}
\end{align*}

Next, following the similar argument to that the proof of \cite[Theorem 1.2]{Tan2020}, we deal with the estimate of $I_2$.
Let A be a nonempty
subset of $\{1,\ldots,m\}$, and we denote the cardinality of
$A$ by $|A|$, then $1\le|A|\le m$. Let $A^c=\{1,\ldots,m\}\backslash
A.$ 
For fixed $A$, assume that $Q_{\tilde k,j_{\tilde k}}$
is the smallest cubes in the set of cubes $Q_{k,j_k},
k\in A$.
Let $z_{\tilde k,j_{\tilde k}}$ is the center of the cube
$Q_{\tilde k,j_{\tilde k}}$.
Denote $K_\alpha(x,y_1,\ldots,y_m)
=|(x-y_1,\ldots,x-y_m)|^{-mn+\alpha}$.
Observe that for all $|\beta|=d+1,\;
\beta=(\beta_1,\ldots,\beta_m)$
\begin{align*}
\begin{split}
|\partial_{y_1}^{\beta_1}\cdots\partial_{y_m}^{\beta_m}
K_\alpha(x,y_1,\ldots,y_m)|
\le C|(x-y_1,\ldots,x-y_m)|^{-mn+\alpha-|\beta|}.
\end{split}
\end{align*}

By using the Taylor expansion we conclude that
\begin{align*}
\begin{split}
  &\mathcal T_\alpha(a_{1,k_1},\ldots,a_{m,k_m})(x) \\
=& \int_{(\mathbb R^n)^{m-1}}\prod_{k\ne \tilde k}a_{k,j_k}(y_k)
\int_{\mathbb R^n}\sum_{|\gamma|=d+1}
(\partial_{y_{\tilde k}}^\gamma
K_\alpha)(x,y_1,\ldots,\xi,\ldots,y_m)\frac{(y_{\tilde k}
-z_{\tilde k,j_{\tilde k}})^\gamma}{\gamma!}
a_{\tilde k}(y_{\tilde k})d\vec{y}
\end{split}
\end{align*}
for some $\xi$ on the line segment joining
$y_{\tilde k}$ to $z_{\tilde k,j_{\tilde k}}$,
where $P_{z_{\tilde k,j_{\tilde k}}}^d(x,y_1,\ldots,y_m)$
is the Taylor polynomial of
$K_\alpha(x,y_1,\ldots,y_m)$.
Since $x\in (Q_{\tilde k,j_{\tilde k}}^\ast)^c$,
we get
$|x-\xi|\ge \frac{1}{2}|x-z_{\tilde k,j_{\tilde k}}|$.
Similarly,
$|x-y_k|\ge \frac{1}{2}|x-z_{k,j_{k}}|$
for $y_k\in Q_{k,j_k}$, $k\in A\backslash\{\tilde k\}$.
By using the estimate for the kernel $K_\alpha$, we conclude that
\begin{align*}
\begin{split}
& \mathcal T_\alpha(a_{1,k_1},\ldots,a_{m,k_m})(x)\\\lesssim&
\prod_{k\in A}\frac{|Q_{k,j_k}|^{1+\frac{d+1}{n|A|}}}
{(|x-z_{k,j_k}|+l(Q_{k,j_k}))^{n+\frac{d+1}{|A|}-\frac{\alpha}{m}}\|\chi_{Q_{k,j_k}}\|_{X_k}}
\prod_{k\in A^c}\frac{l(Q_{k,j_k})^{\frac{\alpha}{m}}\chi_{Q^\ast_{k,j_k}}(x)}
{\|\chi_{Q_{k,j_k}}\|_{X_k}}.
\end{split}
\end{align*}

Then we obtain that
\begin{align*}
\begin{split}
\|I_2\|_{Y}
\lesssim&
\Bigg\|\sum_{j_1}\cdots\sum_{j_m}\prod_{k=1}^m|\lambda_{k,{j_k}}|
\prod_{k\in A}\frac{|Q_{k,j_k}|^{1+\frac{d+1}{n|A|}}}
{(|x-z_{k,j_k}|+l(Q_{k,j_k}))^{n+\frac{d+1}{|A|}-\frac{\alpha}{m}}}
\\
&\quad\quad\quad\quad\times
\prod_{k\in A^c}\frac{l(Q_{k,j_k})^{\frac{\alpha}{m}}\chi_{Q^\ast_{k,j_k}}(x)}
{\|\chi_{Q_{k,j_k}}\|_{X_k}}\Bigg\|_{Y}\\
\lesssim&
\Bigg\|
\prod_{k\in A}
\sum_{j_k}|\lambda_{k,{j_k}}|
\frac{|Q_{k,j_k}|^{1+\frac{d+1}{n|A|}}}
{(|x-z_{k,j_k}|+l(Q_{k,j_k}))^{n+\frac{d+1}{|A|}-\frac{\alpha}{m}}}
\\
&\quad\quad\quad\quad\times
\prod_{k\in A^c}\sum_{j_k}|\lambda_{k,{j_k}}|\frac{l(Q_{k,j_k})^{\frac{\alpha}{m}}\chi_{Q^\ast_{k,j_k}}(x)}
{\|\chi_{Q_{k,j_k}}\|_{X_k}}
\Bigg\|_{Y}.
\end{split}
\end{align*}

For convenience, we denote that
$$
U_A=\sum_{j_k}|\lambda_{k,{j_k}}|
\frac{|Q_{k,j_k}|^{1+\frac{d+1}{n|A|}}}
{(|x-z_{k,j_k}|+l(Q_{k,j_k}))^{n+\frac{d+1}{|A|}-\frac{\alpha}{m}}}
$$
and
$$
U_{A^c}=\sum_{j_k}|\lambda_{k,{j_k}}|\frac{l(Q_{k,j_k})^{\frac{\alpha}{m}}\chi_{Q^\ast_{k,j_k}}(x)}
{\|\chi_{Q_{k,j_k}}\|_{X_k}}.
$$

Then applying (\ref{holder}) we obtain that
\begin{align*}
\begin{split}
\|I_2\|_{Y}
&\lesssim
\left(\prod_{k\in A}\|
U_{A}\chi_{E_A}\|_{Y_k}\right)
\left(\prod_{k\in A^c}\|U_{A^c}
\chi_{E_A}\|_{Y_k}\right).
\end{split}
\end{align*}

Choose $d>1$ enough large such that $\theta:=\frac{n}{n+\frac{d+1}{|A|}-\frac{\alpha}{m}}\in (0,1]$.
By Assumption \ref{ass2.7} and Lemma \ref{l2.8}, we have
\begin{align*}
\begin{split}
\prod_{k\in A}\|U_{A}
\chi_{E_A}\|_{Y_k}
&=\prod_{k\in A}\left\|\sum_{j_k}|\lambda_{k,j_k}|
\frac{l(Q_{k,j_k})^{n+\frac{d+1}{|A|}}}
{(|x-z_{k,j_k}|+l(Q_{k,j_k}))^{n+\frac{d+1}
{|A|}-\frac{\alpha}{m}}}\right\|_{Y_k}\\
&\lesssim
\prod_{k\in A}
\left\|\sum_{j_k}|\lambda_{k,j_k}|l(Q_{k,j_k})^{\frac{\alpha}{m}}{M^{(\theta)}({\chi_{Q_{j,k_j}}})}
\right\|_{Y_k}\\
&\lesssim
\prod_{k\in A}\left\|\sum_{j_k}
|\lambda_{k,j_k}||Q_{k,j_k}|^{\frac{\alpha}{mn}}\chi_{Q_{k,j_k}}
\right\|_{Y_k}\\
&\lesssim \prod_{k=1}^m\left\| \sum_{j_k} \frac{\lambda_{k,j_k}\chi_{Q_{k,j_k}}}{\|\chi_{Q_{k,j_k}}\|_{X_k}}   \right\|_{X_k}.
\end{split}
\end{align*}

Applying Lemma \ref{l2.8} and Assumption \ref{ass2.7} again yield that

\begin{align*}
\begin{split}
\prod_{k\in A^c}\|U_{A^c}
\chi_{E_A}\|_{Y_k}
&=\prod_{k=A^c}\left\|\sum_{j_k}|\lambda_{k,{j_k}}|\frac{l(Q_{k,j_k})^{\frac{\alpha}{m}}\chi_{Q^\ast_{k,j_k}}(x)}
{\|\chi_{Q_{k,j_k}}\|_{X_k}}\right\|_{Y_k}\\
&\lesssim \prod_{k=1}^m\left\| \sum_{j_k} \frac{\lambda_{k,j_k}\chi_{Q_{k,j_k}}}{\|\chi_{Q_{k,j_k}}\|_{X_k}}   \right\|_{X_k}.
\end{split}
\end{align*}
Therefore, combining the estimates for $I_1$ and $I_2$ we get that
\begin{align*}
\|\mathcal T_\alpha(f_1,\dots,f_m)\|_{Y}\lesssim \prod_{k=1}^m\left\| f \right\|_{X_k},
\end{align*}
which proves Theorem \ref{s1t1}.
$\hfill\Box$

{\bf Acknowledgments.}
The project is sponsored by
the National Natural Science Foundation of China
(Grant No. 11901309), the Natural Science Foundation of Nanjing University of Posts and Telecommunications (Grant No. NY224167)
and the Jiangsu Government Scholarship for Overseas Studies.


\end{document}